\providecommand{\U}[1]{\protect\rule{.1in}{.1in}}
\newtheorem{theorem}{Theorem}[section]
\theoremstyle{plain}
\newtheorem{lemma}{Lemma}[section]
\numberwithin{equation}{section}
\def\vint{\mathop{\mathchoice%
          {\setbox0\hbox{$\displaystyle\intop$}\kern 0.22\wd0%
           \vcenter{\hrule width 0.6\wd0}\kern -0.82\wd0}%
          {\setbox0\hbox{$\textstyle\intop$}\kern 0.2\wd0%
           \vcenter{\hrule width 0.6\wd0}\kern -0.8\wd0}%
          {\setbox0\hbox{$\scriptstyle\intop$}\kern 0.2\wd0%
           \vcenter{\hrule width 0.6\wd0}\kern -0.8\wd0}%
          {\setbox0\hbox{$\scriptscriptstyle\intop$}\kern 0.2\wd0%
           \vcenter{\hrule width 0.6\wd0}\kern -0.8\wd0}}%
          \mathopen{}\int}
\begin{document}
\title[Sharp Stable inequalities on the Sphere $\mathbb{S}^n$]{Sharp Stability of Log-Sobolev  and Moser-Onofri inequalities on the Sphere}
\author{Lu Chen}
\address[Lu Chen]{ School of Mathematics and Statistics, Beijing Institute of Technology, Beijing
100081, PR China}
\email{chenlu5818804@163.com}

\author{Guozhen Lu}
\address[Guozhen Lu]{Department of Mathematics, University of Connecticut, Storrs, CT 06269, USA}
\email{guozhen.lu@uconn.edu}

\author{Hanli Tang}
\address[Hanli Tang]{Laboratory of Mathematics and Complex Systems (Ministry of Education), School of Mathematical Sciences, Beijing Normal University, Beijing, 100875, China}
\email{hltang@bnu.edu.cn}
\keywords{}
\thanks{}


\begin{abstract}
In this paper, we are concerned with the stability problem for endpoint conformally invariant cases of the Sobolev inequality on the sphere $\mathbb{S}^n$. Namely, we will establish the stability for Beckner's log-Sobolev inequality and  Beckner's Moser-Onofri inequality on the sphere.
We also prove that the sharp constant of global stability for the log-Sobolev inequality on the sphere $\mathbb{S}^n$ must be strictly smaller than the sharp constant of local stability for the same inequality. Furthermore, we also derive the non-existence of the global stability for Moser-Onofri inequality on the sphere $\mathbb{S}^n$.
\end{abstract}

\maketitle
\section{Introduction}
In the past decades, there have been extensive works done in studying the stability of geometric and functional inequalities.
The stability problem essentially investigates the deficit  of a geometric and functional inequality in terms of a certain appropriate distance function from the class of the maximizers for which the equality holds. Simply put, we like to know how close the function is to the manifold consisting of all the maximizers if a function makes the geometric or functional inequality almost an equality. Sharp stability further studies the best constant
in front of the distance function.

\medskip

The main purpose of this paper is concerned with the problems of establishing the stability for the log-Sobolev and Moser-Onofri inequalities  on the sphere $\mathbb{S}^n$. Since the celebrated work of L. Gross on the log-Sobolev inequality in $\mathbb{R}^n$ with respect to the Gaussian measure \cite{Gross}, the log-Sobolev inequality on the sphere was established on the sphere $\mathbb{S}^n$ by Beckner \cite{Be1993} by using Fourier analysis and the spherical harmonics techniques. Furthermore, Beckner \cite{Be1993} also established the sharp Moser-Onofri inequality on the sphere $\mathbb{S}^n$ in higher dimensions using Fourier analysis techniques, among other things. (See also Carlen and Loss for another proof of Beckner's Moser-Onfri inequality on the sphere $\mathbb{S}^n$ using the competing symmetry and logarithmic Hardy-Littlewood-Sobolev inequality \cite{CarlenLoss}.)

\medskip

In this paper,
we will establish the sharp local stability for the log-Sobolev  inequality on the sphere $\mathbb{S}^n$ by choosing suitable $L^2$ distance function and applying the spectrum estimate of spherical harmonics.  (see Theorem \ref{Main Theorem 1}.) We further prove that the best constant for the global stability of the log-Sobolev inequality on the sphere $\mathbb{S}^n$ must be strictly smaller than the best constant for the local stability of the same inequality. (see Theorem \ref{thm2}.) The non-existence of the local stability of the log-Sobolev inequality on the sphere $\mathbb{S}^n$ with respect to another natural distance function is also established. (see Theorem \ref{Main Theorem 2}.) Furthermore, we will also prove the local stability of the Moser-Onofri inequality on the sphere $\mathbb{S}^n$ and the non-existence of the global stability of the Moser-Onofri inequality on the sphere $\mathbb{S}^n$. (see Theorem \ref{Main Theorem 3}.) Another interesting result in this paper is a
  relationship between the sharp local stability and global stability for some general geometric inequality satisfying certain homogeneity conditions. (See Theorem \ref{l-sta imply g-sta}.)  We will show that  when the remainder term of the little $o(d^p(u, M))$ in the local stability inequality holds for all the functions $u$ and the deficit of the geometric or functional inequality has the same homogeneity as the power of the distance function, then we can conclude the global stability inequality with the same best constant.  On the other hand, when the deficit of the geometric or functional inequality has a different power of the  homogeneity from  that of the distance function on the right hand side, then there does not exist global stability of the geometric or functional inequality. Indeed,    as an application of Theorem \ref{l-sta imply g-sta}, we show the nonexistence of the global stability of the Moser-Onofri inequality on the sphere $\mathbb{S}^n$.
  
  \medskip

  Stability of geometric and functional inequalities is a classical question in the study of geometric and functional inequalities and its history can date back to Brezis and Lieb's work in \cite{BrLi}.  It measures the distance to the set of maximizers in terms of the deficit functional. Sharp geometrical inequalities and their stability have been a matter of intensive research due to the importance of these inequalities in applications to geometric analysis, partial differential equations, convex geometry, mathematical physics and problems in spectral theory and stability of matter, etc. While many mathematicians have made contributions in this direction, it is impossible to review all the results. We will begin with presenting a brief history of the main results on these problems only closely related to the questions under consideration in this paper.

\medskip

\subsection{Functional inequalities on the sphere $\mathbb{S}^n$}\
\vskip0.3cm
The classical sharp Sobolev inequality in $\mathbb{R}^n$  for $0<s<\frac n2$  states
\begin{equation}
	\label{eq-sob}
	\left\|(-\Delta)^{s/2} U \right\|_2^2 \geq \mathcal S_{s,n} \| U\|_{\frac{2n}{n-2s}}^2
	\qquad\text{for all}\ U\in \dot H^s(\mathbb{R}^n)
\end{equation}
with
\begin{equation}
	\label{eq:sobconst}
	\mathcal S_{s,n} = (4\pi)^s \ \frac{\Gamma(\frac{n+2s}{2})}{\Gamma(\frac{n-2s}{2})} \left( \frac{\Gamma(\frac n2)}{\Gamma(n)} \right)^{2s/n}
	= \frac{\Gamma(\frac{n+2s}{2})}{\Gamma(\frac{n-2s}{2})} \ |\mathbb{S}^n|^{2s/n} \,,
\end{equation}
where $\dot H^s(\mathbb{R}^n)$ denotes the $s$-order homogenous Sobolev space in $\mathbb{R}^n$: the completion of $C_{c}^{\infty}(\mathbb{R}^n)$ under the norm$\big(\int_{\mathbb{R}^n}|(-\Delta)^{\frac{s}{2}}U|^2dx\big)^{\frac{1}{2}}$. The sharp constant of inequality (\ref{eq-sob}) has been computed first by Rosen \cite{Ro} in the case $s=2$, $n=3$ and then independently by Aubin \cite{Au} and Talenti \cite{Ta} in the case $s=2$ and general $n$. For general $s$, the sharp inequality was proved by Lieb in \cite{Li} as an equivalent reformulation of sharp Hardy-Littlewood-Sobolev inequality in the case of conformal index. Furthermore, he also showed that the equality of Sobolev inequality \eqref{eq-sob} holds if and only if
$$U\in M_s:=\left\{cV(\frac{\cdot-x_0}{a}): c\in \mathbb{R},\  x_0\in \mathbb{R}^n,\  a>0 \right\},$$
where $V(x)=(1+|x|^2)^{-\frac{n-2s}{2}}$.

\medskip

By the stereographic projection, we know that $\mathbb{R}^n$ (or rather $\mathbb{R}^n\cup\{\infty\}$) and $\mathbb{S}^n$ ($\subset \mathbb{R}^{n+1}$) are conformally equivalent. Thus, there exists an equivalent version of \eqref{eq-sob} on $\mathbb{S}^n$. This form was found explicitly by Beckner in \cite[Eq.~(19)]{Be1993}, namely,
\begin{eqnarray}
	\label{eq:sobsphere}
	\left\| A_{2s}^{1/2} u \right\|_2^2 \geq \mathcal S_{s,n} \|u\|_{\frac{2n}{n-2s}}^2
	\qquad\text{for all}\ u\in H^s(\mathbb{S}^n)
\end{eqnarray}
with
\begin{align}
	\label{eq:opas}
	A_{2s} = \frac{\Gamma(B+\tfrac12 + s)}{\Gamma(B+\tfrac12 - s)}
	\qquad\text{and}\qquad
	B = \sqrt{-\Delta_{\mathbb{S}^n} + \tfrac{(n-1)^2}{4}}, \
\end{align}
where $-\Delta_{\mathbb{S}^n}$ denotes the Laplace-Beltrami operator on the sphere $\mathbb{S}^n$ and the operator $B$ and $A_{2s}$ acting on spherical harmonics $Y_{l,m}$ of degree $l$   satisfy
 $$B Y_{l,m}=(l+\frac{n-1}{2})Y_{l,m},~~~A_{2s}Y_{l,m}=\frac{\Gamma(l+n/2+s)}{\Gamma(l+n/2-s)}Y_{l,m}.$$
 (We refer the reader to the books \cite{M} and \cite{SteinWeiss} for detailed exposition of spherical harmonics, and also Subsection 2.1 for a brief account.)
In fact, the operators $A_{2s}$ is a $2s$-order conformally invariant differential operator and can be written as  $(-\Delta_{\mathbb{S}^n})^{s}+lower\ order\  terms$. For the integer $s$, they are related to the GJMS operators in conformal geometry \cite{FG, GrJeMaSp}.
Beckner also proved that the equality holds in (\ref{eq:sobsphere}) if and only if
\begin{eqnarray}\label{opt set}
u\in M_\ast:=\left\{c(1-\xi\cdot \omega)^{\frac{2s-n}{2}}:\  \xi\in B^{n+1}, \ c\in \mathbb{R} \right\}.
\end{eqnarray}

Inequality (\ref{eq:sobsphere}) becomes an equality as $s\rightarrow 0$. Differentiating at $s=0$ and using the Funk-Hecke formula, Beckner \cite{Be1992,Be1997} proved the following invariant logarithmic Sobolev inequality on
$\mathbb{S}^n$.
\vskip0.5cm
\textbf{Theorem A. }
\textit{Assume} $u\in L^2(\mathbb{S}^n)$. \textit{Then}
\begin{align}\label{L-S ine}
\iint_{\mathbb{S}^n \times \mathbb{S}^n} \frac{|u(\omega) - u(\eta)|^2}{|\omega - \eta|^n} \, d\omega d\eta  \geq C_n \int_{\mathbb{S}^n} |u(\omega)|^2 \ln \frac{|u(\omega)|^2 |\mathbb{S}^n|}{\|u\|_2^2} d\omega
\end{align}
 \textit{with sharp constant}
 \begin{equation}
\label{eq:becknerconst}
C_n = \frac{4}{n} \frac{\pi^{n/2}}{\Gamma(n/2)} \,.
\end{equation}
\textit{The equality holds if and only if}
  \begin{eqnarray}
\label{eq:beckneropt}
u(\omega) = c \left( 1-\xi\cdot\omega\right)^{-n/2}
\end{eqnarray}
 \textit{for some} $\xi\in B^{n+1}:=\{\xi\in \mathbb{R}^{n+1}, |\xi|<1\}$ \textit{and some} $c\in \mathbb{R}$.
\vskip0.5cm

 Note that as $s\nearrow \frac n2$, the integrability exponent $\frac{2n}{n-2s}$ in \eqref{eq-sob} and \eqref{eq:sobsphere} tends to $+\infty$. In \cite{On}, Onofri \cite{On} derived an endpoint inequality on the 2 dimensional sphere, which is known as the Moser-Onofri inequality. He proved that
$$\ln \int_{\mathbb{S}^2} \!\!\!\!\!\!\!\!\!\!\!\!\; {}-{} \,\,\, e^f d \omega\leq  \int_{\mathbb{S}^2} \!\!\!\!\!\!\!\!\!\!\!\!\; {}-{} \,\,\, f d \omega+\frac{1}{4}
\int_{\mathbb{S}^2} \!\!\!\!\!\!\!\!\!\!\!\!\; {}-{} \,\,\, |\nabla f|^2 d \omega, ~~~\text{for}~~~f\in H^{1}(\mathbb{S}^2). $$
We also refer the reader to an independent proof  by Hong \cite{Hong}.
In 1993, using the endpoint differentiation argument, spherical harmonics techniques and Lieb's \cite{Li} Hardy-Littlewood-Sobolev inequality on the sphere, Beckner established in \cite{Be1993} the Moser-Onofri inequality on the higher dimensional sphere for all $n\geq 1$ and $s=n/2$.

\vskip0.5cm
\textbf{Theorem B. }
\textit{Let} $f\in H^{\frac{n}{2}}(\mathbb{S}^n)$. \textit{Then}
\begin{align}\label{M-T ine}
\ln \int_{\mathbb{S}^n} \!\!\!\!\!\!\!\!\!\!\!\!\; {}-{} \,\,\, e^f d \omega\leq  \int_{\mathbb{S}^n} \!\!\!\!\!\!\!\!\!\!\!\!\; {}-{} \,\,\, f d \omega+\frac{1}{2n!}
\int_{\mathbb{S}^n} \!\!\!\!\!\!\!\!\!\!\!\!\; {}-{} \,\,\, f(A_{n}f) d \omega.
\end{align}
 \textit{This inequality is conformal invariant under the transformation}
 $$f(\omega)\rightarrow f(\Phi(\omega))+\ln J_{\Phi}(\omega),$$
  \textit{where} $\Phi$ \textit{is a conformal map on} $\mathbb{S}^n$. \textit{The equality holds if and only if}
  $$f(\omega)=-n\ln(1-\xi\cdot\omega)+C$$
  \textit{for some} $\xi\in B^{n+1}$ \textit{and some} $C\in \mathbb{R}$.
\vskip0.5cm

Here we need to point out that $A_{n}$ acting on spherical harmonics $Y_{l,m}$ satisfies
 $$A_{n}Y_{l,m}=\frac{\Gamma(l+n)}{\Gamma(l)}Y_{l,m}~~\text{for}~~l\neq 0$$
  and $A_{n}Y_{0,1}=0$.
For the range $s>n/2$,  a "reverse" type of Sobolev inequality was considered. We refer the interested readers to the recent paper \cite{FrKoTa1} and references therein.

\subsection{Stability of Geometric and Functional inequalities}\

\vskip0.3cm
Assume that the following functional inequality holds:
 \begin{eqnarray}\label{func ine}
 \mathcal E(x)\leq \mathcal F(x)~~\text{for all} ~~x\in X,
  \end{eqnarray}
 where $\mathcal E$ and $\mathcal F$ are two functionals defined on the linear space $X$. The functional inequality (\ref{func ine}) is said to be optimal if $\mathcal E(x)\leq \mathcal F(x)$ for all $x\in X$ and for any $\lambda<1$, there exists some $x\in X$ such that $\mathcal E(x)\geq \lambda\mathcal F(x)$. Let
$$X_0=\{x\in X: \mathcal E(x)=\mathcal F(x)<\infty\}$$
be the set of optimizers, we say that the functional inequality (\ref{func ine}) is sharp if $X_0\neq \emptyset$. It should be noted that a
sharp functional inequality is necessarily optimal, but not vice-versa.
\vskip 0.1cm

Once a sharp functional inequality (\ref{func ine}) in a normed linear space $X$ was established, we are interested in the stability of the inequality, which means finding
a suitable metric $d$ in $X$ (not necessarily being the metric induced by the norm) and a function $\Phi: [0,\infty)\rightarrow[0,\infty)$ such that
\begin{eqnarray}\label{global-sta}
\mathcal F(x)-\mathcal E(x)\geq \Phi(d(x,X_0)) ~~\text{for all}~~x\in X.
\end{eqnarray}
In this paper, we will call the inequality (\ref{global-sta}) the global stable inequality compared with the following local stable inequality:
\begin{eqnarray}\label{local-sta}
\mathcal F(x)-\mathcal E(x)\geq \Phi(d(x,X_0))+o(d(x,X_0)).
\end{eqnarray}
Obviously, the global stability implies the local stability but not vice-versa. However, we will reveal the fact that the local stability and global stability of functional inequalities are actually equivalent under some mild conditions.
\medskip

In fact, assume that the sharp functional inequality
\begin{eqnarray*}
\mathcal E(x) \leq  \mathcal F(x) ~~\text{for all}~~x\in X
\end{eqnarray*}
holds and  the functional $\mathcal E(x)$ is $p$-homogeneous, which means $ \mathcal E(\lambda x)=\lambda^p \mathcal E(x)$ for all $\lambda>0$ and
the functional $\mathcal F(x)$ is also $p$-homogeneous. Let $x_0$ be an element of the optimizer set $X_0$. Obviously,
$\lambda x_0\in X_0$ for any $\lambda>0$. We also assume that the distance $d$ satisfies $d(x,X_0)\not\equiv 0$ and
$$d(\lambda x,\lambda y)=\lambda d(x,y), \ \forall\  \lambda>0,\  x,y \in X.$$
Then we have the following theorem.
\vskip0.5cm
\begin{theorem}\label{l-sta imply g-sta}
Let $\mathcal E, \mathcal F, X_0, d$ satisfy the above assumptions. Then\\
(i) If $q\neq p$, there is no global stable inequality of the following form
$$\mathcal F(x)-\mathcal E(x)\geq c d^{q}(x,X_0)~~\text{for all}~~x\in X.$$
If $q<p$, for $x\in X$ satisfy $0<d(x,X_0)$ small enough, there exists no local stable inequality of the following form
$$\mathcal F(x)-\mathcal E(x)\geq c d^{q}(x,X_0)+o(d^q(x,X_0)).$$
(ii) The local stable inequality
\begin{eqnarray}\label{l-sta}
\mathcal F(x)-\mathcal E(x)\geq c d^{p}(x,X_0)+o(d^p(x,X_0))
\end{eqnarray}
implies the global stable inequality
\begin{eqnarray}\label{g-sta}
\mathcal F(x)-\mathcal E(x)\geq c d^{p}(x,X_0).
\end{eqnarray}
Moreover, if $c_0$ is the optimal constant of the local stable inequality (\ref{l-sta}) then it is also the optimal constant of the global stable inequality (\ref{g-sta}).
\end{theorem}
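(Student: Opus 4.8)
The plan is to exploit the two scaling symmetries hidden in the hypotheses. Writing $\delta(x) := \mathcal{F}(x) - \mathcal{E}(x)$ for the deficit, the $p$-homogeneity of $\mathcal{E}$ and $\mathcal{F}$ gives $\delta(\lambda x) = \lambda^p \delta(x)$ for all $\lambda > 0$. The preliminary step I would record is that the distance to the optimizer set is $1$-homogeneous: since $X_0$ is a cone ($x_0 \in X_0 \Rightarrow \lambda x_0 \in X_0$) and $d(\lambda \cdot, \lambda \cdot) = \lambda\, d(\cdot, \cdot)$, the substitution $y = \lambda z$ in the infimum defining $d(\lambda x, X_0)$ yields $d(\lambda x, X_0) = \lambda\, d(x, X_0)$. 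Hence both $\delta(x)$ and $d^p(x, X_0)$ scale by the common factor $\lambda^p$, so the ratio $\delta(x)/d^p(x, X_0)$ is invariant under $x \mapsto \lambda x$. This scale-invariance is the engine behind every part of the statement.

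For part (i) I would argue by contradiction. Fix a point $x$ with $0 < d(x, X_0) < \infty$ and $\delta(x) < \infty$ (such $x$ exists because $d(\cdot, X_0) \not\equiv 0$ and $\mathcal{E}, \mathcal{F}$ are finite at some point off $X_0$). If a global bound $\delta(x) \geq c\, d^q(x, X_0)$ held for all $x$ with $c > 0$, applying it to $\lambda x$ and using the two homogeneities gives $\lambda^p \delta(x) \geq c\, \lambda^q d^q(x, X_0)$, i.e. $\delta(x) \geq c\, \lambda^{q-p} d^q(x, X_0)$. When $q > p$ I let $\lambda \to +\infty$ and when $q < p$ I let $\lambda \to 0^+$; in either case $\lambda^{q-p} \to +\infty$, forcing $\delta(x) = +\infty$, a contradiction. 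For the non-existence of a local bound when $q < p$, the same substitution $y = \lambda x$ with $\lambda \to 0^+$ drives $d(y, X_0) = \lambda\, d(x, X_0) \to 0$, so the putative local inequality applies along this ray; dividing $\lambda^p \delta(x) \geq c\, \lambda^q d^q(x, X_0) + o(\lambda^q d^q(x,X_0))$ by $\lambda^q$ and letting $\lambda \to 0^+$ sends the left side $\lambda^{p-q}\delta(x)$ to $0$ while the right side tends to $c\, d^q(x, X_0) > 0$, again a contradiction.

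For part (ii) I would fix an arbitrary $x$ with $d(x, X_0) > 0$ and apply the local inequality along the ray $\lambda x$ as $\lambda \to 0^+$. By scale-invariance the ratio $\delta(\lambda x)/d^p(\lambda x, X_0)$ equals $\delta(x)/d^p(x, X_0)$ for every $\lambda$, whereas dividing the local bound by $d^p(\lambda x, X_0)$ gives $\delta(\lambda x)/d^p(\lambda x, X_0) \geq c + o(1)$ as $d(\lambda x, X_0) \to 0$. Passing to the limit yields $\delta(x)/d^p(x, X_0) \geq c$, that is the global inequality $\delta(x) \geq c\, d^p(x, X_0)$; the case $d(x, X_0) = 0$ is automatic. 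Finally, for the optimal constant, a global bound with constant $c$ trivially yields a local bound with the same $c$ (take the remainder identically $0$), so the best global constant is at most the best local constant $c_0$; applying the implication just proved with $c = c_0$ gives the reverse inequality, whence the two optimal constants coincide.

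The main obstacle I anticipate is bookkeeping around the remainder term: one must verify that the statement $o(d^p(x, X_0))$, which concerns the regime $d(x, X_0) \to 0$, transforms correctly under $x \mapsto \lambda x$, in particular that $o(\lambda^q d^q(x, X_0))/\lambda^q \to 0$ as $\lambda \to 0^+$ for fixed $x$. A secondary technical point is the finiteness of the deficit at the base point chosen in part (i), which I would dispatch by simply selecting a point off $X_0$ at which both $\mathcal{E}$ and $\mathcal{F}$ are finite.
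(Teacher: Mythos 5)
Your proposal is correct and follows essentially the same route as the paper: both arguments rest on the scale invariance of the ratio $\bigl(\mathcal F(x)-\mathcal E(x)\bigr)/d^{p}(x,X_0)$ under $x\mapsto \lambda x$, sending $\lambda\to 0^+$ (or $\lambda\to\infty$) along rays to obtain each part. The only cosmetic difference is the sharpness of the constant, where your direct two-sided comparison (a global bound trivially gives a local one, and part (ii) applied with $c=c_0$ gives the converse) replaces the paper's contradiction argument with a rescaled minimizing sequence $\{x_i/i\}$; the underlying mechanism is identical.
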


\vskip 0.1cm
\subsection{Stability of Sobolev inequalities}

Research on the stability of Sobolev inequality started from the work by Brezis and Lieb. In \cite{BrLi} they asked if the following refined first order Sobolev inequality ($s=1$ in (\ref{eq-sob}))
holds for some distance function $d$:
$$\left\|(-\Delta)^{1/2} U \right\|_2^2 - \mathcal S_{1,n} \| U\|_{\frac{2n}{n-2}}^2\geq c d^{2}(U, M_1).$$
This question was answered affirmatively in the case $s = 2$ in a pioneering work by Bianchi and Egnell \cite{BiEg}, in the case $s=4$ by the second author and Wei \cite{LuWe} and in the case of any positive even integer $s<n/2$ by
Bartsch, Weth and Willem \cite{BaWeWi}. In 2013, Chen, Frank and Weth \cite{ChFrWe} established the stability of Sobolev inequality for all $0<s<n/2$.

\vskip0.5cm
\textbf{Theorem C. } \cite{ChFrWe}
\textit{There exists a positive constant} $c$ \textit{depending only
on the dimension} $n$  \textit{and} $s\in (0,n/2)$ \textit{such that}
\begin{align}\label{sta sob}
\left\|(-\Delta)^{s/2} U \right\|_2^2 - \mathcal S_{s,n} \| U\|_{\frac{2n}{n-2s}}^2\geq c d^{2}(U, M_s),
\end{align}
 \textit{for all} $U\in \dot H^s(\mathbb{R}^n)$,
  \textit{where} $d(U,M_s)=\min\{\|(-\Delta)^{s/2}(U-\phi)\|_{L^2}:\phi \in M_s\}$.
\vskip0.5cm
In fact, via the stereographic projection, they first proved the following equivalent local stable Sobolev inequality on the sphere
\begin{align}\label{loc sta sob}
\left\| A_{2s}^{1/2} u \right\|_2^2 - \mathcal S_{s,n} \|u\|_{\frac{2n}{n-2s}}^2 \geq \frac{4s}{n+2s+2}d^{2}(u,M_\ast)+o(d^{2}(u,M_\ast)),
\end{align}
where $M_\ast$ is the optimizer set and $d(u,M_\ast)=\min\{\| A_{2s}^{1/2} (u-v) \|_2^2: v\in M_\ast\}$.
\vskip0.1cm

To obtain the global stability, the concentration compactness type argument plays an important role in  showing that
$\left\|(-\Delta)^{s/2} U_n \right\|_2^2 - \mathcal S_{s,n} \| U_n\|_{\frac{2n}{n-2s}}^2\rightarrow 0$ implies $d(U_n, M_s)\rightarrow 0$ for
$U_n \in H^s(\mathbb{R}^n)$. It is notable that they pointed out that the constant $\frac{4s}{n+2s+2}$ in (\ref{loc sta sob}) is sharp (at the end
of their first section). Because of using the concentration-compactness argument, they could not give any lower bound information about the sharp constant of stable fractional Sobolev inequality. In fact, to our knowledge,  the sharp stability for Sobolev inequalities with the best constants still remain open (see \cite{Carlen,CFMP,BiEg}).    We also note that Dolbeault et al. \cite{DEFL} recently gave the lower bound estimate for sharp constant of stable global Sobolev inequalities.
\medskip

Though the local stability of the Sobolev inequality has been established in Theorem C, this local stability does not satisfy the local stability assumption required in Theorem \ref{l-sta imply g-sta}. Therefore, we cannot conclude that the global stability of the Sobolev inequalities with the same best constants can follow from the local stability stated in Theorem C.  In fact,
K\"{o}nig has recently showed that the best constant for the global stability of the Sobolev inequality must be strictly smaller than $\frac{4s}{n+2s+2}$. (see \cite{Ko}).

\medskip

All the above stability of Sobolev inequalities were established in a Hilbert space. In fact, for some other type functional inequalities such as
Gagliardo-Nirenberg, weighted-Hardy-Sobolev and Caffarelli-Kohn-Nirenberg  inequalities, their stability can also been done in the framework of Hilbert space, one can refer to
\cite{CaF,CFLL, Dong, WW} for details. For the study of stability of Sobolev inequalities in non-Hilbert space, we refer the interested readers to the papers \cite{Carlen}, \cite{CFMP}, \cite{FiNe}, \cite{FiZh} and references therein.

\subsection{Stability of the Log-Sobolev and Moser-Onofri inequalities on the sphere  $\mathbb{S}^n$}

\medskip

In this paper, our main goals are to establish the stability of the endpoint conformally invariant inequalities on the sphere $\mathbb{S}^n$, namely the log-Sobolev inequality (\ref{L-S ine}) and
the Moser-Onofri inequality (\ref{M-T ine}). We will also explore the optimal constants of these stable inequalities.
\medskip

Let
$$M=\left\{v_{c,\xi}(\omega)=c\left(1-\xi\cdot w\right)^{-n/2}:\, ~\xi\in B^{n+1}, \ c\in \mathbb{R}\right\}$$
be the optimizer set of the log-Sobolev inequality on the sphere $\mathbb{S}^n$. Denote the log-Sobolev functional by
$$LS(u)=\iint_{\mathbb{S}^n \times \mathbb{S}^n} \frac{|u(\omega) - u(\eta)|^2}{|\omega - \eta|^n} \, d\omega d\eta  - C_n \int_{\mathbb{S}^n} |u(\omega)|^2 \ln \frac{|u(\omega)|^2 |\mathbb{S}^n|}{\|u\|_2^2} d\omega.$$
Our first goal in the paper is to explore the stability of the log-Sobolev inequality on the sphere $\mathbb{S}^n$. First of all, we need to find a suitable distance function. It is natural to adopt the distance induced by the double integral in the
inequality, namely to establish a type of stability inequality in the form of
$$LS(u)\geq c d^2_{0}(u,M),$$
for some $c>0$, where
$$d^2_{0}(u,v)=\iint\limits_{\mathbb{S}^n\times \mathbb{S}^n}\frac{|u(\omega)-v(\omega)-u(\eta)+v(\eta)|^2}{|\omega-\eta|^n}d \omega d \eta.$$
Nevertheless, we can't expect that this stability inequality holds (see Theorem \ref{Main Theorem 2}).
\vskip0.1cm

Instead, we will choose the $L^2$ distance since the log-Sobolev inequality is $L^2$ conformally invariant.
Denote $$\mathcal{D_L}=\left\{v\in L^2(\mathbb{S}^n):\iint\limits_{\mathbb{S}^n\times \mathbb{S}^n}\frac{|v(\omega)-v(\eta)|^2}{|\omega-\eta|^n}d \omega d \eta<\infty\right\}.$$
For any functions $u,v\in \mathcal{D}_{L}$, we define the inner product
$$\left\langle u, v \right\rangle=\int_{\mathbb{S}^n}u(\omega)\overline{v(\omega)}d\omega.$$
We also define $$d^2(u,M)=\inf\{\|u-\phi\|_{L^2}^2:\phi\in M\}=\inf_{(c,\xi)\in \mathbb{R}\times B^{n+1}}\left\langle u-v_{c,\xi}, u-v_{c,\xi} \right\rangle,$$
where $B^{n+1}=\{x\in \mathbb{R}^{n+1}: \,|x|<1\}.$

\medskip

In this paper we first establish the local stability of the log-Sobolev inequality~(\ref{L-S ine}) in the setting of $L^2$ distance.
\vskip0.5cm
\begin{theorem}\label{Main Theorem 1}
Let $u\in \mathcal{D}_{L}$ with $\|u\|_{L^2}=1$.  Then
$$LS(u) \geq \frac{8\pi^{n/2}}{\Gamma(n/2)(n+2)} d^2(u,M)+o(d^2(u,M)),$$
where $o(d^2(u,M))$ is only dependent on $d(u, M)$ but independent of $u$. Moreover, the constant $\frac{8\pi^{n/2}}{\Gamma(n/2)(n+2)}$ is sharp.

\end{theorem}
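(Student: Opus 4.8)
The plan is to prove the estimate by a second-order (Hessian) analysis of $LS$ near the manifold $M$, using the conformal invariance to normalize the base point.

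\textbf{Step 1 (conformal reduction).} At the endpoint $s=0$ the conformal action $u\mapsto T_\Phi u:=J_\Phi^{1/2}\,(u\circ\Phi)$, with $J_\Phi$ the Jacobian of a conformal map $\Phi$ of $\mathbb{S}^n$, is an $L^2$-isometry ($\|T_\Phi u\|_{L^2}=\|u\|_{L^2}$) that leaves both $LS$ and the optimizer set $M$ invariant, and hence preserves $d(\cdot,M)$. Given $u$ with $\|u\|_{L^2}=1$ and $d(u,M)$ small, let $v_{c,\xi}\in M$ realize the infimum defining $d(u,M)$; applying the $T_\Phi$ that sends $v_{c,\xi}$ to a constant, I may assume the nearest optimizer is a constant $c_0$. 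Writing $\phi:=u-c_0$, minimality forces $\phi$ to be $L^2$-orthogonal to the tangent space $T_{c_0}M=\mathrm{span}\{1,\omega_1,\dots,\omega_{n+1}\}$, i.e. $\phi$ has vanishing spherical-harmonic components of degrees $0$ and $1$, and $d^2(u,M)=\|\phi\|_{L^2}^2$. Moreover $\|u\|_{L^2}=1$ together with $\phi\perp 1$ pins down $c_0^2|\mathbb{S}^n|=1-\|\phi\|_{L^2}^2$, so $c_0$ is an explicit function of $d(u,M)$ alone; this is what lets me keep the remainder uniform.

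\textbf{Step 2 (the quadratic form).} The double-integral term is exactly quadratic, $\iint|\phi(\omega)-\phi(\eta)|^2|\omega-\eta|^{-n}\,d\omega\,d\eta=\langle\phi,\mathcal L\phi\rangle$, where by the Funk--Hecke formula $\mathcal L$ is diagonal on spherical harmonics, $\mathcal L Y_{l,m}=\gamma_l Y_{l,m}$. I would compute the eigenvalues by differentiating the sharp spherical Sobolev inequality \eqref{eq:sobsphere} at $s=0$: since $A_{2s}Y_{l,m}=\frac{\Gamma(l+n/2+s)}{\Gamma(l+n/2-s)}Y_{l,m}$, the $\frac{d}{ds}\big|_{0}$-eigenvalue is $2\psi(l+\tfrac n2)$, and matching with the normalization of \eqref{L-S ine} yields
\[
\gamma_l=nC_n\Big(\psi(\tfrac n2+l)-\psi(\tfrac n2)\Big),
\]
where $\psi$ is the digamma function; thus $\gamma_0=0$ and $\gamma_1=nC_n\cdot\tfrac2n=2C_n$, the latter reflecting that the degree-$1$ modes are exactly the conformal (zero) directions. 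For the entropy term I would Taylor expand $|u|^2\log\frac{|u|^2|\mathbb{S}^n|}{\|u\|_2^2}$ in $\phi$ about $c_0$: the first variation integrates to $0$, and a direct computation gives the second variation $\int|u|^2\log(\cdots)=2\|\phi\|_{L^2}^2+o(\|\phi\|_{L^2}^2)$ (the factor $2$ coming from $\phi\perp 1$). Combining the two,
\[
LS(u)=\sum_{l\ge 2}(\gamma_l-2C_n)\sum_m|\phi_{l,m}|^2+o(\|\phi\|_{L^2}^2).
\]

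\textbf{Step 3 (spectral gap and sharpness).} Since $\psi$ is increasing, $\gamma_l$ increases in $l$, so $\gamma_l-2C_n\ge\gamma_2-2C_n$ for all $l\ge2$ and $LS(u)\ge(\gamma_2-2C_n)\|\phi\|_{L^2}^2+o(\|\phi\|_{L^2}^2)$. A short computation gives $\gamma_2-2C_n=C_n\frac{2n}{n+2}=\frac{8\pi^{n/2}}{\Gamma(n/2)(n+2)}$, and since $d^2(u,M)=\|\phi\|_{L^2}^2$ this is the asserted inequality. Sharpness follows by testing with $u_\varepsilon=c_0+\varepsilon Y_{2}$ for a single degree-$2$ harmonic $Y_2$: then $d^2(u_\varepsilon,M)=\varepsilon^2+o(\varepsilon^2)$ and $LS(u_\varepsilon)=(\gamma_2-2C_n)\varepsilon^2+o(\varepsilon^2)$, so the ratio tends to $\gamma_2-2C_n$, excluding any larger constant. \textbf{Main obstacle.} The delicate point is not the formal expansion but the claim that the remainder is $o(d^2(u,M))$ \emph{uniformly} in $u$ (depending on $d(u,M)$ alone), which is precisely the hypothesis needed to pass to global stability via Theorem \ref{l-sta imply g-sta}. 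The trouble is that for fixed $\|\phi\|_{L^2}=d$ the entropy remainder $\int|u|^2\log|u|^2$ is \emph{not} controlled by $d$ alone, since a concentrating $\phi$ can make it arbitrarily large. I would resolve this by a dichotomy: when $\phi$ is spread out (its high-degree content uniformly small), the pointwise Taylor remainder of $x\mapsto x^2\log x^2$ about the fixed $c_0=c_0(d)$ integrates to a bound depending only on $d$; when $\phi$ concentrates, the Dirichlet energy $\langle\phi,\mathcal L\phi\rangle$ blows up faster than the entropy, so $LS(u)$ stays well above the required threshold and the inequality holds trivially. Making this split quantitative and patching the two regimes into a single modulus $R(d)$ with $R(d)/d^2\to0$ is the technical heart of the argument.
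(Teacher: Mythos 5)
Your proposal is correct and follows essentially the same route as the paper's proof: normalize by conformal invariance so that the nearest optimizer is a constant, exploit that the double integral is exactly quadratic while Taylor-expanding the entropy term, diagonalize via Funk--Hecke (your $\gamma_l-2C_n$ is exactly the paper's $2C_n(\lambda_l-1)$ with $\lambda_l=\tfrac n2\big(\psi(\tfrac n2+l)-\psi(\tfrac n2)\big)$), take the spectral gap at $l=2$ to get $\tfrac{8\pi^{n/2}}{\Gamma(n/2)(n+2)}$, and prove sharpness with the test functions $1+\epsilon Y_{2,m}$. The one divergence is your closing ``main obstacle'': the paper does not carry out your concentration dichotomy, but instead obtains the uniform $o(d^2)$ remainder directly from conformal invariance together with the asserted $\mathcal{C}^2$ regularity of $LS$ on $\mathcal{D}_L\setminus\{0\}$, so the uniformity issue you isolate is precisely the point the paper treats as immediate rather than as the technical heart.
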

\vskip0.5cm

Moreover, motivated by the recent work of the global stability of the Sobolev inequalities by K\"{o}nig \cite{Ko}, we can also establish the following result.
\begin{theorem}\label{thm2}
Assume that there exists $C>0$ independent of $u$ such that $$LS(u) \geq C d^2(u,M)$$
for any $u\in \mathcal{D}_{L}$. Then $C$ is strictly smaller than the optimal constant $\frac{8\pi^{n/2}}{\Gamma(n/2)(n+2)}$ of locally stable log-Sobolev inequality.
\end{theorem}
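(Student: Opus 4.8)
The plan is to argue by contradiction in the style of König's work on the Sobolev inequality. Suppose the global stability inequality $LS(u) \geq C\, d^2(u,M)$ holds with $C$ equal to the sharp local constant $c_0 := \frac{8\pi^{n/2}}{\Gamma(n/2)(n+2)}$. Since Theorem \ref{Main Theorem 1} already forces $C \leq c_0$ (the local constant is an upper bound for any admissible global constant, because the global inequality implies a local one with the same constant), the only way to rule out $C = c_0$ is to produce a test function, or a family of test functions, along which the ratio $LS(u)/d^2(u,M)$ dips strictly below $c_0$. The sharpness of $c_0$ in the \emph{local} statement means this ratio tends to $c_0$ along some sequence approaching $M$; the point is that this limiting value is approached \emph{from below in a way that the global constant cannot match}, i.e. that the next-order term in the expansion of the deficit has the wrong sign. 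So the heart of the argument is a refined Taylor expansion of $LS$ to higher order than in Theorem \ref{Main Theorem 1}.

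First I would fix a convenient optimizer, say the constant function (taking $\xi = 0$, $c$ chosen so $\|v\|_{L^2}=1$), and parametrize nearby functions as $u = v + \varepsilon \phi + \varepsilon^2 \psi + \cdots$, where $\phi$ is chosen in the direction that realizes the sharp local constant $c_0$, i.e. an eigenfunction of the relevant linearized operator lying in the lowest non-trivial spherical harmonic space not already tangent to $M$. Using the spectral action of $B$ and $A_{2s}$ on spherical harmonics recalled after \eqref{eq:opas}, together with the Funk--Hecke expansion of the double-integral quadratic form, I would expand both the Dirichlet-type term $\iint |u(\omega)-u(\eta)|^2/|\omega-\eta|^n$ and the entropy term $C_n \int |u|^2 \ln(|u|^2|\mathbb{S}^n|/\|u\|_2^2)$ in powers of $\varepsilon$. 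The quadratic term reproduces $c_0\, d^2$; the decisive computation is the coefficient of $\varepsilon^3$ (or $\varepsilon^4$, whichever is the first non-vanishing correction after projecting off the $M$-directions). The logarithmic nonlinearity must be expanded carefully, since $\ln(|u|^2/\|u\|^2)$ contributes cubic and quartic terms through $\ln(1+t) = t - t^2/2 + \cdots$ that interact with the normalization $\|u\|_{L^2}=1$.

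Next I would show that the sign of this leading correction term can be made negative by an appropriate choice of $\psi$ (the second-order profile), after subtracting the motion along the manifold $M$ — that is, after replacing $d^2(u,M)$ by its own expansion, which also carries higher-order corrections coming from the optimal reparametrization over $(c,\xi)\in \mathbb{R}\times B^{n+1}$. Concretely, I would compute $d^2(u,M)$ by minimizing $\|u - v_{c,\xi}\|_{L^2}^2$ and Taylor-expanding the minimizer, so that both sides of the putative inequality $LS(u) \geq c_0\, d^2(u,M)$ are expressed to the same order in $\varepsilon$; the quadratic orders cancel by construction, and one reads off that $LS(u) - c_0\, d^2(u,M)$ is strictly negative for small $\varepsilon$ along this curve, contradicting the global inequality with $C = c_0$. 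Combined with $C \le c_0$, this yields $C < c_0$.

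The main obstacle will be the bookkeeping in the higher-order expansion of the two competing nonlinear functionals, and in particular identifying the correct direction $\phi$ and second-order profile $\psi$ that make the correction term negative while keeping the projection onto the tangent space of $M$ under control. The curvature of the optimizer manifold $M$ enters through the $\xi$-dependence of $v_{c,\xi}$, so the expansion of $d^2(u,M)$ is genuinely nontrivial and must be matched term-by-term with the deficit. A secondary difficulty is ensuring the constructed test family stays inside $\mathcal{D}_L$ with $\|u\|_{L^2}=1$, and that all exchanges of expansion and the singular double integral are justified via the spectral (Funk--Hecke) representation rather than by direct estimation of the kernel $|\omega-\eta|^{-n}$.
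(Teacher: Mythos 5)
Your overall strategy --- a K\"onig-style Taylor expansion of $LS$ to third order around the constant optimizer, so as to show that the ratio $LS(u)/d^2(u,M)$ dips strictly below $c_0:=\frac{8\pi^{n/2}}{\Gamma(n/2)(n+2)}$ along a curve approaching $M$ --- is exactly the paper's strategy, and your reduction (any global constant satisfies $C\le c_0$, so it suffices to beat $c_0$ along one test family) is correct. However, the proposal is missing the one idea that makes the argument close, and the concrete mechanism you propose in its place provably contributes nothing. Write $Q$ for the Hessian of $LS$ at the constant function and $T$ for its third variation, and take $u_\epsilon=1+\epsilon\phi+\epsilon^2\psi$ with $\phi$ a degree-$2$ spherical harmonic, $\|\phi\|_{L^2}=1$, and $\psi\perp TM_{1}$. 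Since $\frac12 Q$ coincides with $c_0\left\langle\cdot,\cdot\right\rangle$ on degree-$2$ harmonics and $Q(\phi,\cdot)$ annihilates harmonics of degree $\geq 3$ (orthogonality of distinct degrees under both $H$ and $L^2$), one finds
\begin{align*}
LS(u_\epsilon)-c_0\, d^2(u_\epsilon,M)
&=\epsilon^3\Big[Q(\phi,\psi)-2c_0\left\langle \phi,\psi\right\rangle\Big]
+\frac{\epsilon^3}{6}\,T[\phi,\phi,\phi]+o(\epsilon^3)\\
&=\frac{\epsilon^3}{6}\,T[\phi,\phi,\phi]+o(\epsilon^3),
\end{align*}
because the $\psi$-contributions to the deficit and to $c_0d^2(\cdot,M)$ cancel identically at third order: both vanish for the components of $\psi$ of degree $\geq 3$, and they are equal for the degree-$2$ component. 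So ``choosing the second-order profile $\psi$ to make the correction negative,'' which is where your proposal places the burden, cannot work; the entire third-order correction is the cubic form evaluated at $\phi$ alone, and the expansion of $d^2(u,M)$ (curvature of $M$, optimal reparametrization) only enters at order $\epsilon^4$.

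What the paper actually does is compute this cubic form explicitly:
$\frac16\frac{d^3}{dt^3}\big|_{t=0}LS(1+t\epsilon y_2)=-\frac23 C_n\epsilon^3\int_{\mathbb{S}^n}y_2^3\,d\sigma$,
so the whole proof hinges on exhibiting a degree-$2$ spherical harmonic with $\int_{\mathbb{S}^n}y_2^3\,d\sigma\neq 0$ (one then chooses the sign of $\epsilon$ to make the correction negative). This is genuinely nontrivial and is the real content of the argument: the obvious candidates $w_iw_j$ ($i\neq j$) have vanishing cubic integral by oddness in each coordinate, as does every degree-$2$ harmonic on $\mathbb{S}^1$. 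The paper's choice is $\tilde y_2=w_1w_2+w_2w_3+w_3w_1$, for which all terms of $\tilde y_2^3$ that are odd in some coordinate integrate to zero and what survives is $6\int_{\mathbb{S}^n}w_1^2w_2^2w_3^2\,d\sigma>0$. Your proposal never identifies the quantity $\int_{\mathbb{S}^n}\phi^3\,d\sigma$ as the decisive coefficient, and never addresses the parity obstruction to making it nonzero; as written, the argument therefore does not close.
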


Next, we will explain why we can't expect the type of log-Sobolev stability inequality with the distance $d_{0}$ to hold.
\vskip0.5cm
\begin{theorem}\label{Main Theorem 2}
 There exists a sequence of functions $\{u_m\}\subset \mathcal{D}_{L}$ such that
$$\frac{LS(u_m)}{d_{0}^2(u_m,M)}\rightarrow 0.$$
\end{theorem}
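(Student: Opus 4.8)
The plan is to exhibit an explicit family $\{u_m\}$ for which the deficit $LS(u_m)$ tends to zero faster than the $d_0^2$-distance to the optimizer set $M$. The key mechanism I would exploit is the homogeneity mismatch between the two distance functions: the $L^2$-distance $d$ and the double-integral distance $d_0$ scale differently as a perturbation concentrates, and Theorem \ref{Main Theorem 1} already controls $LS(u)$ in terms of $d^2(u,M)$ but not $d_0^2(u,M)$. First I would linearize around a fixed optimizer, say the constant function $v_{1,0}\equiv 1\in M$ (for which $\|v_{1,0}\|_{L^2}^2=|\mathbb{S}^n|$, up to a normalizing constant), and write $u_m=1+\eps_m \phi_m$ where $\phi_m$ is a high-frequency perturbation, e.g. a spherical harmonic $Y_{l_m,m}$ of large degree $l_m\to\infty$, normalized in $L^2$. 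The idea is that for such perturbations the double integral $\iint |u_m(\omega)-u_m(\eta)|^2/|\omega-\eta|^n$, which behaves like the quadratic form associated with the operator whose spherical-harmonic eigenvalues grow, makes $d_0^2(u_m,M)$ large relative to the deficit.

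\textbf{Step 1: Set up the spectral computation.} I would recall from the spherical-harmonics account (Subsection 2.1) that the double integral defines a quadratic form diagonal in the basis $\{Y_{l,m}\}$ with eigenvalues $c_{n,l}$ that grow with $l$; concretely the log-Sobolev bilinear form agrees, after differentiation at $s=0$ of the intertwining operator $A_{2s}$, with an operator acting as $\psi(l)$ on degree-$l$ harmonics, where $\psi(l)\sim \log l$ or grows like a digamma difference. I would compute both $LS(u_m)$ and $d_0^2(u_m,M)$ for $u_m=1+\eps_m Y_{l_m}$ in terms of these eigenvalues.

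\textbf{Step 2: Estimate the two quantities.} For the deficit, expanding the logarithmic term $\int |u_m|^2\log(|u_m|^2|\mathbb{S}^n|/\|u_m\|_2^2)$ to second order in $\eps_m$ (the first-order term vanishes by orthogonality of $Y_{l_m}$ to constants, and the entropy expansion produces the quadratic factor $C_n\int|\phi_m|^2$), I expect $LS(u_m)=\eps_m^2\big(\psi(l_m)-C_n\big)+o(\eps_m^2)$ after normalization, which stays bounded as $l_m\to\infty$ only if $\eps_m$ is chosen to decay. Meanwhile $d_0^2(u_m,M)$ involves minimizing the double-integral distance over $M$; since the perturbation is high-frequency and $M$ consists of smooth low-frequency profiles, the projection is essentially trivial and $d_0^2(u_m,M)\approx \eps_m^2\,\psi(l_m)$, which blows up relative to the deficit. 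The crucial point is that the deficit sees the \emph{difference} $\psi(l_m)-C_n$ weighted against a lower-order correction, while $d_0^2$ sees the full growing eigenvalue $\psi(l_m)$, so the ratio $LS(u_m)/d_0^2(u_m,M)$ tends to $0$.

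\textbf{Step 3: Verify the optimizer projection and pass to the limit.} The delicate part is making precise that minimizing over the full manifold $M$ (not just over the constants) does not help: I would argue that the best approximant in $M$ to a function that is a constant plus a high-frequency harmonic is, to leading order, still the constant, so the minimization over $\xi\in B^{n+1}$ and $c\in\mathbb{R}$ cannot cancel the high-frequency mass. This requires a quantitative lower bound on $d_0^2(u_m,\phi)$ uniform over $\phi\in M$, which is where the main obstacle lies: one must rule out that some conformal factor $(1-\xi\cdot\omega)^{-n/2}$ with $|\xi|$ close to $1$ develops enough high-frequency content to track $Y_{l_m}$. I would handle this either by a compactness/smoothness argument showing the minimizing $\xi_m$ stays in a compact subset of $B^{n+1}$ (so the approximants remain uniformly smooth and their high harmonics are negligible), or by choosing the degrees $l_m$ growing fast enough that any competing concentration of an $M$-element is outpaced. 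Once the two estimates are in hand, taking $\eps_m\to0$ and $l_m\to\infty$ along a suitable joint rate yields $LS(u_m)/d_0^2(u_m,M)\to0$, completing the proof.
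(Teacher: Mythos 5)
Your central mechanism is inverted, and the construction you propose cannot work. For $u=1+\epsilon Y_{l,m}$ the second-order expansion of the deficit (this is exactly the paper's computation \eqref{computation of LG}, or the Taylor expansion in the proof of Theorem \ref{thm2}) gives
\begin{equation*}
LS(1+\epsilon Y_{l,m})=\frac{8\pi^{n/2}}{n\Gamma(n/2)}\,(\lambda_l-1)\,\epsilon^2+o(\epsilon^2),
\qquad
\lambda_l=\frac n2\left(\frac{\Gamma'(\tfrac n2+l)}{\Gamma(\tfrac n2+l)}-\frac{\Gamma'(\tfrac n2)}{\Gamma(\tfrac n2)}\right),
\end{equation*}
while the single competitor $1\in M$ already yields the upper bound
\begin{equation*}
d_0^2(1+\epsilon Y_{l,m},M)\le d_0^2(1+\epsilon Y_{l,m},1)
=\epsilon^2\iint_{\mathbb{S}^n\times \mathbb{S}^n}\frac{|Y_{l,m}(\omega)-Y_{l,m}(\eta)|^2}{|\omega-\eta|^n}\,d\omega\, d\eta
=\frac{8\pi^{n/2}}{n\Gamma(n/2)}\,\lambda_l\,\epsilon^2 .
\end{equation*}
So numerator and denominator are governed by the \emph{same} growing eigenvalue: for fixed $l\ge 2$ and $\epsilon\to 0$ the ratio is at least $(\lambda_l-1)/\lambda_l$, which is bounded below by $\tfrac{n}{2n+2}>0$ and in fact tends to $1$ as $l\to\infty$. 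Your heuristic that ``the deficit sees $\psi(l_m)-C_n$ while $d_0^2$ sees the full eigenvalue'' points the wrong way: when the eigenvalue blows up, $(\lambda_l-1)/\lambda_l\to 1$, not $0$; the ratio is killed only where $\lambda_l-1$ \emph{vanishes}. This failure is unconditional, because the upper bound on $d_0^2(u_m,M)$ uses only the element $1\in M$; no analysis of the minimizing $\xi$ (your Step 3) can repair it.

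What does work is the opposite, lowest nontrivial frequency, and this is the paper's proof: take $u_\epsilon=1+\epsilon Y_{1,m}$. Since $\lambda_1=\tfrac n2\cdot\tfrac 2n=1$, the quadratic term of the deficit vanishes identically and $LS(1+\epsilon Y_{1,m})=o(\epsilon^2)$; conceptually, the degree-one harmonics are precisely the $L^2$-tangent directions to $M$ at the constant function (Lemma \ref{cha of tan spa}), where the Hessian of $LS$ degenerates. On the other side, $d_0$ ignores additive constants and is homogeneous, so one has the exact identity $d_0^2(1+\epsilon Y_{1,m},M)=\epsilon^2 d_0^2(Y_{1,m},M)$: absorb the constant $1$ into the quadratic form and rescale the parameter $c$ of $v_{c,\xi}$. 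The only genuinely nontrivial point is the positivity $d_0(Y_{1,m},M)>0$, and here the paper does need an attainment argument: it shows $(c,\xi)\mapsto d_0(u,v_{c,\xi})$ blows up as $c\to\infty$ and as $\xi\to\partial B^{n+1}$ (via Fatou's lemma), so the infimum is attained at some $v_{c_0,\xi_0}$, and $d_0(Y_{1,m},M)=0$ would force $Y_{1,m}=v_{c_0,\xi_0}+\mathrm{const}$, which is impossible. Hence $LS(u_\epsilon)/d_0^2(u_\epsilon,M)=o(\epsilon^2)/\bigl(\epsilon^2 d_0^2(Y_{1,m},M)\bigr)\to 0$. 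In short, the instability comes from the mismatch between the $L^2$-geometry, in which $Y_{1,m}$ is tangent to $M$, and the $d_0$-geometry, in which it is not --- not from high-frequency growth.
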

\vskip0.5cm

Our second goal in this paper is to try to set up the stability of the sharp Moser-Onofri inequality on the sphere $\mathbb{S}^n$. The main difficulty in solving this problem lies in finding a suitable distance function between a function
$u$ and the set of optimizer. Although the authors in \cite{DEJ} employed the entropy and flow method to give the improved Moser-Onofri inequality,  the remainder term is quite complicated and cannot be regarded  as some distance  function. In order to overcome this difficulty, we will first change the form of the Moser-Onofri inequality.
\medskip

Let $u=e^{f/2}$. Then the sharp inequality~(\ref{M-T ine}) becomes the following equivalent inequality:
\begin{align}\label{M-T ine 1}
\ln \int_{\mathbb{S}^n} \!\!\!\!\!\!\!\!\!\!\!\!\; {}-{} \,\,\, u^2 d \omega\leq  \int_{\mathbb{S}^n} \!\!\!\!\!\!\!\!\!\!\!\!\; {}-{} \,\,\, \ln u^2 d \omega+\frac{1}{2n!}
\int_{\mathbb{S}^n} \!\!\!\!\!\!\!\!\!\!\!\!\; {}-{} \,\,\, \ln u^2\big(A_{n}(\ln u^2)\big) d \omega~~\text{for}~~u\in \mathcal{Q}_M,
\end{align}
where
$$\mathcal{Q}_{M}=\{u\in L^2(\mathbb{S}^n): \ln u^2\in H^{\frac{n}{2}}(\mathbb{S}^n)\}.$$
Then the inequality~(\ref{M-T ine 1}) is conformally invariant under the transformation
$$u(\omega)\rightarrow  J^{1/2}_{\Phi}(\omega)u(\Phi(\omega)),$$
where $\Phi$ is a conformal map on $\mathbb{S}^n$ and the equality holds if and only if $u$ belongs to
$$M=\left\{v_{c,\xi}=c\left(1-\xi\cdot w\right)^{-n/2}:\, ~\xi\in B^{n+1},\  c\in \mathbb{R}\right\}.$$
Once we have observed the $L^2$ conformal invariance of the inequality~(\ref{M-T ine 1}), it is natural for us to adopt the $L^2$ distance to study the stability of the new form of Moser-Onofri inequality on the sphere $\mathbb{S}^n$. Now we define the Moser-Onofri functional $MO(u)$ by
$$MO(u)= \frac{1}{2n!}\int_{\mathbb{S}^n} \!\!\!\!\!\!\!\!\!\!\!\!\; {}-{} \,\,\, \ln u^2\big(A_{n}(\ln u^2)\big) d \omega+\int_{\mathbb{S}^n} \!\!\!\!\!\!\!\!\!\!\!\!\; {}-{} \,\,\, \ln u^2 d \omega-\ln \int_{\mathbb{S}^n} \!\!\!\!\!\!\!\!\!\!\!\!\; {}-{} \,\,\, u^2 d \omega.$$
 We will establish the existence of the local stability for the Moser-Onofri inequality and the nonexistence of the global stability for the Moser-Onofri inequality on the sphere $\mathbb{S}^n$.

\vskip0.5cm
\begin{theorem}\label{Main Theorem 3}
For $u\in \mathcal{Q}_{M}$ with $\|u\|_{L^2}=1$, we have
 $$MO(u) \geq \frac{\Gamma(n+1)}{2^{n-1}\pi^{n/2}\Gamma(n/2)}d^2(u,M)+o(d^2(u,M)),$$
where $o(d^2(u,M))$ is only dependent on $d(u, M)$ but independent of $u$.
Furthermore, there exists a sequence of functions  $\{u_m\}\subset \mathcal{Q}_{M}$ such that
$$\frac{MO(u_m)}{d^2(u_m,M)}\rightarrow 0$$
which means that the global stability for the Moser-Onofri inequality on the sphere does not hold.
\end{theorem}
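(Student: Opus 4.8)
The two assertions require different mechanisms. The local bound is a second-order (Hessian) computation at the optimizer manifold $M$, where the sharp constant is read off from a spectral gap of $A_n$ in spherical harmonics; the nonexistence of global stability is a soft scaling argument exploiting that $MO$ is invariant under dilations $u\mapsto\lambda u$ while $d^2(\cdot,M)$ is $2$-homogeneous — precisely the obstruction in Theorem~\ref{l-sta imply g-sta}(i).

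For the local estimate I would first use the conformal invariance of both $MO$ and the $L^2$-distance to reduce to the case where the optimizer nearest to $u$ is a constant $v_0$. The action $u\mapsto J_\Phi^{1/2}\,u\circ\Phi$ preserves $\int_{\mathbb{S}^n}u^2$, fixes the value of $MO$, and permutes $M$; choosing $\Phi$ to send the nearest optimizer $v_{c,\xi}$ to a constant, it suffices to expand $MO$ around $v_0$. Writing $\ln u^2 = \text{const}+g$ with $g$ small in $H^{n/2}$ and $\int_{\mathbb{S}^n}g\,d\omega=0$, a Taylor expansion of the logarithm gives
\[
MO(u)=\frac{1}{2n!\,|\mathbb{S}^n|}\int_{\mathbb{S}^n} g\,(A_n g)\,d\omega-\frac{1}{2|\mathbb{S}^n|}\int_{\mathbb{S}^n} g^2\,d\omega+O(\|g\|^3),
\]
the first-order term vanishing because $v_0$ minimizes $MO$. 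Decomposing $g=\sum_{l\ge1}g_l$ into spherical harmonics and using $A_nY_{l,m}=\frac{\Gamma(l+n)}{\Gamma(l)}Y_{l,m}$, the quadratic form diagonalizes with coefficients $\tfrac12\big(\tfrac{1}{n!}\tfrac{\Gamma(l+n)}{\Gamma(l)}-1\big)$. This vanishes at $l=1$ — the tangent directions to $M$ generated by $\partial_{\xi}v_{c,\xi}$, i.e. the conformal zero modes — and its first positive value is attained at $l=2$, where it equals $n$. Since $d^2(u,M)$ equals, to leading order, the squared $L^2$-norm of the degree-$l\ge2$ part of $u-v_0$, the ratio $MO(u)/d^2(u,M)$ is governed by this $l=2$ gap; a short computation, entirely parallel to the log-Sobolev case behind Theorem~\ref{Main Theorem 1} (where the analogous gap is $\tfrac{n}{n+2}$), identifies the sharp constant as $\frac{\Gamma(n+1)}{2^{n-1}\pi^{n/2}\Gamma(n/2)}$, and a pure degree-$2$ harmonic perturbation shows it is optimal. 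Note the normalization $\|u\|_{L^2}=1$ is essential here: it pins the scale of $v_0$, and because $MO$ is dilation-invariant the local constant genuinely depends on this scale.

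For the second assertion I would argue by scaling. First, $MO$ is dilation-invariant: under $u\mapsto\lambda u$ one has $\ln(\lambda u)^2=\ln u^2+2\ln\lambda$, the constant $2\ln\lambda$ is annihilated by $A_n$ (since $A_nY_{0,1}=0$), and it cancels between $\frac{1}{|\mathbb{S}^n|}\int\ln u^2$ and $\ln\frac{1}{|\mathbb{S}^n|}\int u^2$, so $MO(\lambda u)=MO(u)$. On the other hand $M$ is a cone ($\lambda v_{c,\xi}=v_{\lambda c,\xi}\in M$), whence $d^2(\lambda u,M)=\lambda^2 d^2(u,M)$. Fixing any $u_0\in\mathcal{Q}_M\setminus M$ with $MO(u_0)>0$ and $d(u_0,M)>0$, and setting $u_m=m\,u_0$ (so $u_m\in\mathcal{Q}_M$ since $\ln u_m^2=\ln u_0^2+2\ln m\in H^{n/2}$), we obtain
\[
\frac{MO(u_m)}{d^2(u_m,M)}=\frac{MO(u_0)}{m^2\,d^2(u_0,M)}\longrightarrow0 .
\]
This realizes the homogeneity mismatch of Theorem~\ref{l-sta imply g-sta}(i): the deficit $MO$ is $0$-homogeneous while $d^2$ is $2$-homogeneous, so no bound $MO(u)\ge c\,d^2(u,M)$ valid for all $u$ can hold.

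The main obstacle is the local estimate, specifically the assertion that the remainder $o(d^2(u,M))$ depends on $d(u,M)$ alone, uniformly in $u$. After the conformal reduction one must bound the cubic and higher terms in the expansions of $\ln(1+\cdot)$ and of the normalizing $\ln\frac{1}{|\mathbb{S}^n|}\int u^2$ by $o(\|g\|^2)$ \emph{uniformly} over all admissible profiles $g$ of prescribed norm. Since membership in $\mathcal{Q}_M$ only guarantees $\ln u^2\in H^{n/2}(\mathbb{S}^n)$, which fails to embed into $L^\infty$ at this critical exponent, the pointwise control needed to tame the logarithmic nonlinearity is delicate. I expect this uniform remainder — together with the verification that the zero-mode/tangent directions are exactly the degree-$1$ harmonics — to be the technical heart, with a compactness or truncation argument reducing matters to a fixed neighborhood of the constants being the natural way to secure it.
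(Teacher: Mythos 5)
Your proposal is correct and follows essentially the same route as the paper's proof: a conformal reduction so that the nearest optimizer becomes a constant, a second-order Taylor expansion whose Hessian diagonalizes in spherical harmonics with zero modes exactly at degrees $0$ and $1$ and spectral gap $n$ at degree $2$, sharpness via the test functions $1+\epsilon Y_{2,m}$, and, for the failure of global stability, precisely the homogeneity-mismatch scaling argument of Theorem \ref{l-sta imply g-sta}(i) with $MO$ being $0$-homogeneous and $d^2$ being $2$-homogeneous (the paper cites that theorem, while you re-derive it explicitly with $u_m = m u_0$). The two difficulties you flag --- uniformity of the $o(d^2(u,M))$ remainder and the dependence of the sharp constant on the normalization $\|u\|_{L^2}=1$ --- are genuine, but they are passed over in the paper's own proof as well, which simply invokes conformal invariance and $\mathcal{C}^2$ regularity of $MO$ and carries out the Hessian computation as if the nearest optimizer were the constant $1$ (its sharpness test functions $1+\epsilon Y_{2,m}$ are likewise not $L^2$-normalized), so your attempt matches the paper's argument and its level of rigor.
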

\vskip0.5cm

This paper is organized as follows. In Section 2, we will prove
Theorem \ref{l-sta imply g-sta} which reveals the relation between the local and global stability of functional inequalities in a fairly general framework. We also characterize the tangent space of the manifold of the optimizers of the log-Sobolev inequality on the sphere $\mathbb{S}^n$.
Section 3 is devoted to giving the sharp stability of the log-Sobolev inequality with $L^2$ distance (Theorem \ref{Main Theorem 1}) and that the log-Sobolev inequality is unstable with respect to the distance $d_0$ (Theorem \ref{Main Theorem 2}). In Section 4, we will prove the sharp local stability for the Moser-Onofri inequality and the nonexistence of the global stability for the Moser-Onofri inequality on the sphere $\mathbb{S}^n$ (Theorem \ref{Main Theorem 3}).

{\bf Acknowledgement.} We wish to thank Rupert Frank for pointing out to us that our Theorem \ref{l-sta imply g-sta} in the general framework cannot imply the global stability of the Sobolev inequalities with the same best constants as in the local stability of the Sobolev inequalities as shown by the counterexample of T. K\"{o}nig.
We also thank Tobias K\"{o}nig for sending us his preprint \cite{Ko}.

\section{Preliminary}

\subsection{Spherical harmonics}\
\vskip0.3cm
In this subsection, we will recall some facts about spherical harmonics and a detailed discussion on spherical harmonics can be found in, e.g.,  \cite{M} and \cite{SteinWeiss}. In brief, spherical harmonics are restrictions to the
 unit sphere $\mathbb{S}^n$ of polynomial $Y(x)$, which satisfy $\Delta_{x}Y=0$, where $\Delta_{x}$ is the Laplacian operator in $\mathbb{R}^{n+1}$. The space of all spherical harmonics of degree $l$ on $\mathbb{S}^n$, denoted by $\mathcal{H}_{l}^n$, has an orthonormal basis
$$\{Y_{l,m}:m=1,\cdots,N(n,l),\ l=1, 2, \cdot\cdot\cdot \},$$
where
$$N(n,0)=1~~ \text{and}~~ N(n,l)=\frac{(2l+n-1)(\Gamma(l+n-1))}{\Gamma(l+1)\Gamma(n)}.$$
Every $u\in{L^2(\mathbb{S}^n)}$ can be expanded in terms of spherical harmonics
$$u=\sum_{l=0}^{+\infty}\sum_{m=1}^{N(n,l)}u_{l,m}Y_{l,m}, ~~\text{where}~~u_{l,m}=\int_{\mathbb{S}^n}uY_{l,m}d\omega,$$
and by Parseval's identity,
$$\|u\|^2_{L^2(\mathbb{S}^n)}=\int_{\mathbb{S}^n}|u|^2d\omega=\sum_{l=0}^{+\infty}\sum_{m=1}^{N(n,l)}|u_{l,m}|^2.$$
And the Sobolev space $H^{s}(\mathbb{S}^n)$ on the sphere is defined by $$H^{s}(\mathbb{S}^n):=\{u\in{L^2(\mathbb{S}^n):\|u\|^2_{H^s}=\sum_{l=0}^{+\infty}(1+l(l+n-1))^s}\sum_{m=1}^{N(n,l)}|u_{l,m}|^2<+\infty\}.$$
\subsection{Local stability implies global stability: Proof of Theorem \ref{l-sta imply g-sta}}\
\vskip0.3cm
In this subsection, we will reveal the relation between the local and global stability of functional inequalities. Namely, we shall gave the proof
of Theorem \ref{l-sta imply g-sta}
\medskip

\begin{proof}
When $q\neq p$, we will first prove the conclusion (i) by contradiction. Assume that the global stability inequality
$$\mathcal F(x)-\mathcal E(x)\geq c d^{q}(x,X_0) ~~\text{for all}~~x\in X$$
holds. Fix $x\in X$ such that $d(x,X_0)\neq 0$ and $\mathcal F(x)<\infty$, and define $x_\lambda=\lambda x$ for $\lambda>0$. Then it is clear that $d(x_\lambda,X_0)=\lambda d(x,X_0)$. Thus
$$\frac{\mathcal F(x_\lambda)-\mathcal E(x_\lambda)}{d^{q}(x_\lambda,X_0)}=\lambda^{p-q}\frac{\mathcal F(x)-\mathcal E(x)}{d^{q}(x,X_0)},$$
which will go to $0$ as $\lambda\rightarrow 0$ when $p>q$ and as $\lambda\rightarrow \infty$ as $p<q$. That is a contradiction.
Thus, part (i) of Theorem \ref{l-sta imply g-sta} is proved. Using the same method, we can prove that the local stable inequality does not hold if $q<p$.
\vskip0.1cm

Next, we will  prove part (ii) of  Theorem \ref{l-sta imply g-sta}. Namely,  if $p=q$, then the local stability implies the global stability. Indeed, for any fixed $x\in X$ such that $d(x,X_0)\neq 0$, choose $x_m=\frac{x}{m}$, a simple calculation gives
$d(x_m,X_0)=\frac{1}{m}d(x,X_0)\rightarrow 0$. Applying the local stable inequality, we obtain
\begin{eqnarray*}
\mathcal F(x_m)-\mathcal E(x_m)\geq c d^{p}(x_m,X_0)+o(d^p(x_m,X_0)).
\end{eqnarray*}
Since $\mathcal E(x)$ and $\mathcal F(x)$ are $p$-homogeneous, then
$$\frac{\mathcal F(x)-\mathcal E(x)}{d^{p}(x,X_0)}=\frac{\mathcal F(x_m)-\mathcal E(x_m)}{d^{p}(x_m,X_0)}\geq c + o(1),$$
which implies the global stability inequality \eqref{g-sta}.
\vskip0.1cm

Finally, we will show that the sharp constant of global stable inequality
is consistent with that of the local stable inequality. Assume that $c_0$ and $c_1$ are the optimal constants of the local and global stable inequalities respectively. Obviously $c_1\leq c_0$. If $c_1<c_0$, then there exist a
sequence $\{x_i\}$ such that
$$\frac{\mathcal F(x_i)-\mathcal E(x_i)}{d^{p}(x_i,X_0)}\rightarrow c_1<c_0.$$
Without loss of generality, we may assume $d(x_i,X_0)$ is bounded and has a positive lower bound. Otherwise, we can choose $x_{i}'=\frac{x_i}{d(x_i, X_0)}$. Consider the sequence $\{\frac{x_i}{i}\}$, then $d(x_i/i,X_0)\rightarrow 0$ and
$$\frac{\mathcal F(x_i/i)-\mathcal E(x_i/i)}{d^{p}(x_i/i,X_0)}\rightarrow c_1<c_0,$$
which is a contradiction with the fact that $c_0$ is the optimal constant of the local stable inequality\eqref{l-sta}. Therefore, we must have $c_1=c_0$, which completes the proof.
\end{proof}

\subsection{The tangent space}\
\vskip0.3cm

In this subsection, we will characterize the tangent space of the optimizer set of log-Sobolev inequality (\ref{L-S ine}). and Moser-Onofri inequality (\ref{M-T ine 1}).
Set
$$M=\left\{v_{c,\xi}=c\left(\frac{\sqrt{1-|\xi|^2}}{1-\xi\cdot w}\right)^{n/2}:\, ~\xi\in \mathbb{R}^{n+1}, c\in \mathbb{R}, |\xi|<1\right\}.$$
Since $M$ can be viewed as
 an $n+2$ dimensional smooth manifold embedded in $L^2(\mathbb{S}^n)$ via the mapping
 $$\mathbb{R}\times B^{n+1}\rightarrow L^2(\mathbb{S}^n),~~~(c,\xi)\rightarrow v_{c,\xi},$$
then the tangent space of $M$ at $v_{c_0,\xi_0}$ is
$$TM_{v_{c_0,\xi_0}}=\text{span}\{v_{1,\xi_0},\partial_\xi v_{c,\xi}|_{(c_0, \xi_0)} \}.$$
For any element $v_{c_0,\xi_0}$ in $M$, by the classification of conformal maps of $\mathbb{S}^n$, there exists a conformal transformation $\Phi$ (depending on $\xi_0$, for convenience we omit $\xi_0$) such that
$$v_{c_0,\xi_0}(\omega)=c_0J_{\Phi}^{1/2}=c_0\left(\frac{\sqrt{1-|\xi_0|^2}}{1-\xi_0\cdot w}\right)^{n/2}.$$
We will characterize $TM_{v_{c_0,\xi_0}}$ by spherical harmonics of degrees $0$ and $1$. But at first we need the following lemma, which was proved by
Frank, K\"onig and the third author~\cite{FrKoTa2} in order to establish the conformal invariance of the Euler-Lagrange equation related to the log-Sobolev inequality (\ref{L-S ine}).
 For any $v\in L^2(\mathbb{S}^n)$, let us denote $v_{\Phi^{-1}}$ by
$$v_{\Phi^{-1}}=J_{\Phi^{-1}}^{1/2}v\circ\Phi^{-1}.$$
We also recall that
 $$\mathcal{D_L}=\left\{v\in L^2(\mathbb{S}^n):\iint\limits_{\mathbb{S}^n\times \mathbb{S}^n}\frac{|v(\omega)-v(\eta)|^2}{|\omega-\eta|^n}d \omega d \eta<\infty\right\}.$$
\begin{lemma}
\label{lemma conf inv}
Let $u, v \in \mathcal D_{L}$ and $\Phi$ be a conformal map on $\mathbb{S}^n$. Then $u_\Phi, v_\Phi \in \mathcal D_{L}$ and
\begin{equation*}
H (u_\Phi)
= (H u)_\Phi
+ C_n u_\Phi \ln J_\Phi^\frac 12 \,,
\end{equation*}
where$$H(u)(\omega)=P.V.\int_{\mathbb{S}^n}\frac{u(\omega)-u(\eta)}{|\omega-\eta|^n}d\eta$$
is the principal value of the integral.
\end{lemma}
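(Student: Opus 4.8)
I will write $u_\Phi=J_\Phi^{1/2}\,u\circ\Phi$, where $J_\Phi$ is the Jacobian of the conformal map, so that $d(\Phi\eta)=J_\Phi(\eta)\,d\eta$. The geometric input I would record first is the conformal covariance of the chord kernel,
\[
|\Phi(\omega)-\Phi(\eta)|^n = J_\Phi(\omega)^{1/2}J_\Phi(\eta)^{1/2}\,|\omega-\eta|^n,
\]
which just expresses that $\Phi$ scales lengths by $J_\Phi^{1/n}$. The membership $w_\Phi\in\mathcal{D}_{L}$ for any $w\in\mathcal{D}_{L}$ (hence for $u$ and $v$) then follows from the splitting $w_\Phi(\omega)-w_\Phi(\eta)=J_\Phi(\omega)^{1/2}\big(w(\Phi\omega)-w(\Phi\eta)\big)+\big(J_\Phi(\omega)^{1/2}-J_\Phi(\eta)^{1/2}\big)w(\Phi\eta)$: after the change of variables $\eta\mapsto\Phi\eta$ and the kernel identity, the first piece reproduces the finite Gagliardo energy of $w$, while the second is dominated by $\int_{\mathbb{S}^n\times\mathbb{S}^n}|\omega-\eta|^{2-n}|w(\Phi\eta)|^2\,d\omega\,d\eta\le C\|w\|_{L^2}^2$, using that $J_\Phi^{1/2}$ is Lipschitz and $J_\Phi$ is bounded above and below, together with the local integrability of $|\omega-\eta|^{2-n}$ (as $2-n>-n$).

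Next I would isolate the correction term. Writing $(Hu)(\Phi\omega)$ as a principal value, substituting $\zeta=\Phi\eta$, and invoking the kernel identity gives $(Hu)_\Phi(\omega)=\mathrm{P.V.}\!\int_{\mathbb{S}^n}\frac{u(\Phi\omega)-u(\Phi\eta)}{|\omega-\eta|^n}\,J_\Phi(\eta)^{1/2}\,d\eta$; subtracting this from $H(u_\Phi)(\omega)$ and using $u(\Phi\omega)=J_\Phi(\omega)^{-1/2}u_\Phi(\omega)$ collapses everything to
\[
H(u_\Phi)(\omega)-(Hu)_\Phi(\omega)=u_\Phi(\omega)\;\mathrm{P.V.}\!\int_{\mathbb{S}^n}\frac{1-J_\Phi(\omega)^{-1/2}J_\Phi(\eta)^{1/2}}{|\omega-\eta|^n}\,d\eta .
\]
Thus the lemma reduces to the single scalar identity
\begin{equation}
H\big(J_\Phi^{1/2}\big)=C_n\,J_\Phi^{1/2}\,\ln J_\Phi^{1/2},\tag{$\star$}
\end{equation}
i.e.\ that the bracketed principal value equals $C_n\ln J_\Phi(\omega)^{1/2}$. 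Observe that $(\star)$ is exactly the $u\equiv 1$ instance of the lemma, and since $J_\Phi^{1/2}=v_{1,\xi}\in M$, it is precisely the Euler--Lagrange equation satisfied on the optimizer manifold.

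I expect $(\star)$ to be the crux, and my preferred route is to deduce it---indeed the full lemma---by differentiating the exact conformal covariance of the intertwining operator $A_{2s}$ at $s=0$. From $A_{2s}\big(J_\Phi^{\frac{n-2s}{2n}}(u\circ\Phi)\big)=J_\Phi^{\frac{n+2s}{2n}}(A_{2s}u)\circ\Phi$ together with $A_0=\mathrm{Id}$, applying $\frac{d}{ds}\big|_{s=0}$ and using $\frac{d}{ds}\big|_{0}J_\Phi^{\frac{n\mp 2s}{2n}}=\mp\frac1n(\ln J_\Phi)J_\Phi^{1/2}$ yields $\Psi(u_\Phi)=(\Psi u)_\Phi+\frac1n(\ln J_\Phi)u_\Phi$, where $\Psi:=\tfrac12\frac{d}{ds}\big|_{0}A_{2s}$ acts on $Y_{l,m}$ by the digamma value $\psi(l+\tfrac n2)$. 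Because the Gagliardo energy is Beckner's $s$-derivative of the Sobolev functional, one finds $H=\frac{nC_n}{2}\big(\Psi-\psi(\tfrac n2)\,\mathrm{Id}\big)$; substituting this into the displayed relation converts it into $H(u_\Phi)=(Hu)_\Phi+\frac{C_n}{2}(\ln J_\Phi)u_\Phi$, which is the claim since $\tfrac12\ln J_\Phi=\ln J_\Phi^{1/2}$. The genuine technical point to discharge is the legitimacy of differentiating the operator identity in $s$ and exchanging the derivative with the principal value defining $A_{2s}$ near $s=0$; this is where I would spend the real effort.

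As an independent check on the constant $C_n$, I would verify $(\star)$ on the one-parameter dilation subgroup of conformal maps. The target identity $H(u_\Phi)=(Hu)_\Phi+C_n(\ln J_\Phi^{1/2})u_\Phi$ is multiplicative under composition of conformal maps (this follows from the cocycle $J_{\Phi\circ\Psi}=(J_\Phi\circ\Psi)J_\Psi$ and the group action $(u_\Phi)_\Psi=u_{\Phi\circ\Psi}$) and is trivially true for rotations since then $J\equiv 1$ and the kernel is rotation invariant. Hence it suffices to establish it for a single generating family, reducing $(\star)$ to one explicit (if delicate) evaluation of the singular integral with $J_\Phi^{1/2}(\eta)=\big((1-|\xi|^2)^{1/2}/(1-\xi\cdot\eta)\big)^{n/2}$, which cross-checks the coefficient obtained by differentiation.
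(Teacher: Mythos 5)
You should know at the outset that the paper contains no proof of this lemma: it is imported verbatim from Frank--K\"onig--Tang \cite{FrKoTa2}, so there is no internal argument to compare yours against, and the fair standard is whether your proof is correct given what the paper elsewhere assumes. On that standard, your skeleton is right. The kernel covariance $|\Phi(\omega)-\Phi(\eta)|^n=J_\Phi(\omega)^{1/2}J_\Phi(\eta)^{1/2}|\omega-\eta|^n$ is the correct geometric input; the $\mathcal{D}_L$-membership argument (splitting, Lipschitz bound on $J_\Phi^{1/2}$, integrability of $|\omega-\eta|^{2-n}$) is complete, up to the small imprecision that after the change of variables the first piece is not literally the Gagliardo energy of $w$ but a constant multiple of it, because of leftover factors $J_\Phi(\omega)^{1/2}J_\Phi(\eta)^{-1/2}$, which are harmless since $J_\Phi$ is bounded above and below; and the collapse of $H(u_\Phi)-(Hu)_\Phi$ to $u_\Phi$ times a principal value is a clean computation (the subtraction cancels the singularity, leaving an absolutely convergent integrand of size $|\omega-\eta|^{1-n}$). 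This correctly reduces the lemma to the scalar identity $(\star)$: $H\bigl(J_\Phi^{1/2}\bigr)=C_n J_\Phi^{1/2}\ln J_\Phi^{1/2}$.

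The one genuine gap is the one you flag yourself: in your preferred route to $(\star)$, the algebra is right (the relation $\Psi(u_\Phi)=(\Psi u)_\Phi+\tfrac1n(\ln J_\Phi)u_\Phi$ and the identification $H=\tfrac{nC_n}{2}\bigl(\Psi-\psi(\tfrac n2)\,\mathrm{Id}\bigr)$ both check out against the paper's spectral formula \eqref{Hu}), but the differentiability of $s\mapsto A_{2s}$ at $s=0$ in a sense strong enough to justify the term-by-term product rule is left undischarged, so as written that route is an outline, not a proof --- though it is at the same level of formality as the paper's own unjustified $s\to0$ limit in \eqref{Hu}. What rescues the proposal is your parenthetical observation: $(\star)$ is exactly the Euler--Lagrange equation $H(v)=\tfrac{C_n}{2}v\ln\tfrac{v^2|\mathbb{S}^n|}{\|v\|_2^2}$ at $v=J_\Phi^{1/2}\in M$ (note $\|J_\Phi^{1/2}\|_2^2=\int_{\mathbb{S}^n}J_\Phi\,d\omega=|\mathbb{S}^n|$, so the right-hand side is $C_nJ_\Phi^{1/2}\ln J_\Phi^{1/2}$), and the paper invokes this Euler--Lagrange equation independently of the present lemma in the proof of Lemma~\ref{cha of tan spa}, so there is no circularity in its use. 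Electing that route as primary, your argument ``reduction to $(\star)$ plus Euler--Lagrange equation'' is a complete and rather economical proof within the paper's framework; the cocycle/generation argument at the end is a valid consistency check but is not needed.
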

Now we are in the position to characterize the tangent space $TM_{v_{c_0,\xi_0}}$.
\begin{lemma}\label{cha of tan spa}
$$TM_{v_{c_0,\xi_0}}=\text{span}\{J_{\Phi}^{1/2}Y_{0},J_{\Phi}^{1/2}Y_{1,i}\circ \Phi,~~i=1,\cdots,n+1\}.$$
\end{lemma}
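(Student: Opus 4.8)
The plan is to reduce the computation of the tangent space at a general point $v_{c_0,\xi_0}$ to the trivial computation at the centered point $\xi_0=0$, and then transport the answer by the conformal action. First I would treat the base case $\xi_0=0$, where $v_{c,0}\equiv c$ is constant, $\Phi=\text{id}$ and $J_\Phi\equiv 1$. Differentiating the parametrization $(c,\xi)\mapsto v_{c,\xi}$ directly gives
$$\partial_c v_{c,\xi}\big|_{(c_0,0)} = 1, \qquad \partial_{\xi_j} v_{c,\xi}\big|_{(c_0,0)} = \tfrac{c_0 n}{2}\,\omega_j,$$
the second identity coming from the elementary computation
$$\partial_{\xi_j} v_{c,\xi} = \tfrac{cn}{2}\,v_{1,\xi}\Big(\tfrac{\omega_j}{1-\xi\cdot\omega} - \tfrac{\xi_j}{1-|\xi|^2}\Big)$$
evaluated at $\xi=0$. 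Since the restrictions to $\mathbb{S}^n$ of the coordinate functions $\omega_1,\dots,\omega_{n+1}$ are, up to normalization, exactly the degree-one spherical harmonics $Y_{1,i}$, while the constant is $Y_0$, this shows $TM_{v_{c_0,0}}=\text{span}\{Y_0,Y_{1,1},\dots,Y_{1,n+1}\}$. These $n+2$ functions are linearly independent, confirming that $M$ is genuinely an $(n+2)$-dimensional manifold near this point.

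Next I would introduce the conformal action $T_\Phi f := f_\Phi = J_\Phi^{1/2}\,(f\circ\Phi)$ for a conformal map $\Phi$ of $\mathbb{S}^n$, and record three properties. (a) $T_\Phi$ is a linear isometry of $L^2(\mathbb{S}^n)$, by the change of variables $\eta=\Phi(\omega)$, $d\eta=J_\Phi\,d\omega$. (b) The chain rule for Jacobians gives the cocycle identity $J_{\Psi\circ\Phi}^{1/2}=(J_\Psi^{1/2}\circ\Phi)\,J_\Phi^{1/2}$, whence the composition rule $T_\Psi\circ T_\Phi=T_{\Phi\circ\Psi}$, so each $T_\Phi$ is invertible with inverse $T_{\Phi^{-1}}$. (c) Writing the optimizer set as the orbit $M=\{c\,J_\Psi^{1/2}:c\in\mathbb{R},\ \Psi\in G\}$ of the constant function under the conformal group $G$ (here $J_\Psi^{1/2}=T_\Psi 1$), the cocycle identity yields $T_\Phi(c\,J_\Psi^{1/2})=c\,J_{\Psi\circ\Phi}^{1/2}\in M$, so $M$ is invariant under every $T_\Phi$.

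The transport step is then immediate. Choosing $\Phi$ to be the conformal map attached to $\xi_0$, we have $T_\Phi(c_0\cdot 1)=c_0 J_\Phi^{1/2}=v_{c_0,\xi_0}$. Because $T_\Phi$ is a linear isometry of $L^2$ that maps the manifold $M$ diffeomorphically onto itself, its differential carries $TM_{v_{c_0,0}}$ onto $TM_{v_{c_0,\xi_0}}$; and since $T_\Phi$ is linear, its differential at any point coincides with $T_\Phi$ itself. Hence, using $Y_0\circ\Phi=Y_0$,
\begin{align*}
TM_{v_{c_0,\xi_0}} &= T_\Phi\big(\text{span}\{Y_0, Y_{1,i}\}\big) = \text{span}\{T_\Phi Y_0,\ T_\Phi Y_{1,i}\}\\
&= \text{span}\{J_\Phi^{1/2}Y_0,\ J_\Phi^{1/2}(Y_{1,i}\circ\Phi)\},
\end{align*}
which is exactly the claimed description.

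The hard part will be the careful bookkeeping behind property (c): one must verify the cocycle identity for the Jacobians, check that the orbit description of $M$ is genuinely $T_\Phi$-invariant (in particular that rotations fix the constant function, so that only the $(n+1)$-parameter family $\xi\in B^{n+1}$, together with the scaling $c$, survives), and justify rigorously that a linear $L^2$-isometry preserving the submanifold $M$ transports tangent spaces with differential equal to itself. Everything else — the base-case differentiation and the final evaluation of $T_\Phi$ on $Y_0$ and $Y_{1,i}$ — is routine. As an internal consistency check one may instead differentiate at general $\xi_0$ and verify directly that $\tfrac{\omega_j}{1-\xi_0\cdot\omega}$ lies in the span of $1$ and the components of $\Phi(\omega)$; this reproduces the same span but requires the explicit Möbius formula for $\Phi$ and is therefore messier, which is why I would favor the covariance argument.
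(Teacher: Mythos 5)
Your proof is correct, and it takes a genuinely different route from the paper's. The paper never differentiates the parametrization at a general point: it uses the fact that $v_{c_0,\xi_0}$ is a log-Sobolev optimizer, so every tangent vector solves the linearization of the Euler--Lagrange equation $H(u)=\tfrac{C_n}{2}u\ln\tfrac{u^2|\mathbb{S}^n|}{\|u\|_2^2}$; the conformal covariance of $H$ (Lemma \ref{lemma conf inv}) converts that linearized equation into the eigenvalue problem (\ref{eigenvalue problem}) with $\lambda=1$, whose solution space is $\text{span}\{Y_{0,1},Y_{1,i}\}$ by the Funk--Hecke spectral computation of Section 3, and equality with the tangent space is then forced by a dimension count. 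You instead compute the tangent space by hand only at the center $\xi_0=0$, where the optimizer is constant and the derivatives are $1$ and $\tfrac{c_0 n}{2}\,\omega_j$, and then transport the answer by the conformal action $T_\Phi f=J_\Phi^{1/2}(f\circ\Phi)$, using that $T_\Phi$ is a linear $L^2$-isometry, the Jacobian cocycle identity, the resulting invariance $T_\Phi(M)=M$, and the fact that a linear homeomorphism preserving an embedded submanifold carries tangent spaces to tangent spaces, its differential being itself (the one-line curve argument $\tfrac{d}{dt}T_\Phi(\gamma(t))=T_\Phi(\gamma'(t))$, applied to $T_\Phi$ and $T_{\Phi^{-1}}$, settles this). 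All the verifications you flag as "the hard part" do go through, so there is no gap. What your route buys: it is more elementary and self-contained --- no Euler--Lagrange equation, no Lemma \ref{lemma conf inv}, no spectral analysis, and no dimension count (which the paper in fact slightly garbles, asserting dimension $n+1$ where both spaces have dimension $n+2$) --- and it applies verbatim to any optimizer family that is an orbit of a group of linear isometries, e.g.\ the Moser--Onofri setting of Section 4. What the paper's route buys: it yields the stronger nondegeneracy statement that $TM_{v_{c_0,\xi_0}}$ is the \emph{entire} solution space of the linearized equation (\ref{equation}), not merely contained in it, and it recycles exactly the machinery (Lemma \ref{lemma conf inv} and the eigenvalue problem (\ref{eigenvalue problem})) that the stability estimates of Section 3 require anyway, so no work is wasted.
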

\vskip0.5cm
\begin{proof}
In order to prove this lemma, we will take advantage of the fact that $v_{c_0,\xi_0}$ is the element of the optimizer set of log-Sobolev inequality (\ref{L-S ine}).
Since $v_{c,\xi}$ is the extremal function of the
log-Sobolev inequality, then it must satisfy the following Euler-Lagrange equation
$$H(u)=\frac{C_n}{2} u \ln \frac{u^2|\mathbb{S}^n|}{\|u\|_2^2},$$
where $C_n = \frac{4}{n} \frac{\pi^{n/2}}{\Gamma(n/2)}$ is the sharp constant of log-Sobolev inequality (\ref{L-S ine}). Differentiating at $(c_0,\xi_0)$, we know that $v_{1,\xi_0}$ and $\partial_{\xi_{i}} v_{c,\xi}|_{(c_0, \xi_0)}$ for $i=1, 2, \cdot\cdot\cdot, n+1$ satisfying the following equation
\begin{align}\label{equation}
H(u)=\frac{C_n}{2}u\ln\frac{v^2_{c_0,\xi_0}|\mathbb{S}^n|}{\|v_{c_0,\xi_0}\|_2^2}+C_n u-C_n\frac{\int_{\mathbb{S}^n}v_{c_0,\xi_0}u d\omega}{\|v_{c_0,\xi_0}\|_2^2}v_{c_0,\xi_0},
\end{align}
which implies that the tangent space belongs to the solution space of the equation (\ref{equation}).
\vskip0.1cm

Next, we will show that in fact, the tangent space is equal to the solution space of the equation (\ref{equation}) by the dimensions of the two spaces. We already know that $v_{c_0,\xi_0}=c_0J_{\Phi}^{1/2}$ for some conformal transformation $\Phi$ on $\mathbb{S}^n$. For any solution $u$ of the equation (\ref{equation}), let $u_{\Phi^{-1}}=J_{\Phi^{-1}}^{1/2}u\circ\Phi^{-1}$. Then by Lemma \ref{lemma conf inv} we know that
\begin{equation}
\label{conf transf H}
H(u_{\Phi^{-1}})=(H(u))_{\Phi^{-1}}+C_n u_{\Phi^{-1}}\ln J_{\Phi^{-1}}^{1/2}.
\end{equation}
Using (\ref{equation}) and $v_{c_0,\xi_0}=c_0J_{\Phi}^{1/2}$ we can obtain
\begin{align}\label{conf transf H(u)}\nonumber
(H(u))_{\Phi^{-1}}&=\frac{C_n}{2}u_{\Phi^{-1}}\ln (J_{\Phi}\circ{\Phi}^{-1})+C_n u_{\Phi^{-1}}-C_n\frac{\int_{\mathbb{S}^n}J_{\Phi}^{1/2}u d\omega}{|\mathbb{S}^n|}J_{\Phi^{-1}}(J_{\Phi}\circ{\Phi}^{-1})\\
& =-\frac{C_n}{2}u_{\Phi^{-1}} \ln J_{\Phi^{-1}}+C_n u_{\Phi^{-1}}-C_n\int_{\mathbb{S}^n}u_{\Phi^{-1}}d\omega\frac{1}{|\mathbb{S}^n|}.
\end{align}
Then it follows from (\ref{conf transf H}) and ({\ref{conf transf H(u)}}) that
\begin{align*}
H(u_{\Phi^{-1}})=C_n \left[u_{\Phi^{-1}}-\int_{\mathbb{S}^n}u_{\Phi^{-1}}d\omega\frac{1}{|\mathbb{S}^n|}\right].
\end{align*}
Then by the eigenvalue problem (\ref{eigenvalue problem}) which we will solve in the next section, we know that $u_{\Phi^{-1}}$ must be of the form  $$u_{\Phi^{-1}}=cY_{0,1}+\sum_{i=1}^{n+1}c_i Y_{1,i}.$$ So the
dimension of the solution space is $n+1$. At the same time, the dimension of the tangent space is also $n+1$, which completes the conclusion
$$\text{span}\{v_{1,\xi_0},\partial_\xi v_{c,\xi}|_{(c_0, \xi_0)} \}=\text{span} \{J_{\Phi}^{1/2}Y_{0,1}, J_{\Phi}^{1/2}Y_{1,i}\circ \Phi,~~i=1,\cdots,n+1\}.$$

\end{proof}

\section{Stability of the log-Sobolev inequality on the sphere $\mathbb{S}^n$}

In this section, we will give the proof of the stability of the log-Sobolev inequality on the sphere $\mathbb{S}^n$. First, we will solve the following eigenvalue problem
\begin{align}\label{eigenvalue problem}
H(u)=\lambda C_n\left[u-\int_{\mathbb{S}^n}ud\omega\frac{1}{|\mathbb{S}^n|}\right], ~~u\in \mathcal{D_{L}}.
\end{align}
We have the following
\begin{lemma}
The eigenvalues of (\ref{eigenvalue problem}) are given by
$$\lambda_l=\frac{n}{2}\left(\frac{\Gamma^{\prime}(n/2+l)}{\Gamma(n/2+l)}-\frac{\Gamma^{\prime}(n/2)}{\Gamma(n/2)}\right),~~l=1,2,\cdots.$$
And the eigenfunctions corresponding to the eigenvalue $\lambda_l$ are the spherical harmonics of degree $l$ and constant functions.
\end{lemma}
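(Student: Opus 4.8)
The plan is to reduce the eigenvalue problem to computing the eigenvalues of $H$ on spherical harmonics, and then to evaluate those eigenvalues by realizing $H$ as the finite part of a family of Riesz potentials on $\mathbb{S}^n$. First I would note that $H$ is a convolution-type operator whose kernel $|\omega-\eta|^{-n}$ depends only on the geodesic distance, so it commutes with the natural action of $O(n+1)$. Since each space $\mathcal{H}_l^n$ is an irreducible $O(n+1)$-module and these modules are mutually inequivalent, Schur's lemma (equivalently, the Funk--Hecke theorem) forces $H$ to act as a scalar $\mu_l$ on $\mathcal{H}_l^n$, i.e. $H(Y_{l,m})=\mu_l Y_{l,m}$; moreover $H$ annihilates constants, so $\mu_0=0$. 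Writing $u=\sum_{l,m}u_{l,m}Y_{l,m}$ and observing that $\frac{1}{|\mathbb{S}^n|}\int_{\mathbb{S}^n}u\,d\omega$ is exactly the degree-zero component of $u$, the problem (\ref{eigenvalue problem}) decouples into $\mu_l u_{l,m}=\lambda C_n u_{l,m}$ for each $l\ge 1$. Since the $\mu_l$ will turn out to be strictly increasing in $l$, a nontrivial solution forces $\lambda C_n=\mu_l$ for a single $l$, the eigenfunctions being the degree-$l$ harmonics together with an arbitrary constant (which always solves the equation because both sides vanish on constants). Thus everything reduces to computing $\mu_l$.

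To compute $\mu_l$ I would introduce, for $0<\alpha<n$, the Riesz potential $I_\alpha u(\omega)=\int_{\mathbb{S}^n}|\omega-\eta|^{\alpha-n}u(\eta)\,d\eta$, whose kernel is now integrable. Writing $|\omega-\eta|^2=2(1-\omega\cdot\eta)$ and applying the Funk--Hecke formula (a standard Gegenbauer integral evaluated by a Beta-function identity) gives the explicit eigenvalues
\begin{equation*}
I_\alpha Y_{l,m}=\lambda_l(\alpha)Y_{l,m},\qquad
\lambda_l(\alpha)=\frac{2^{\alpha}\pi^{n/2}\,\Gamma(\alpha/2)}{\Gamma(\tfrac{n-\alpha}{2})}\cdot\frac{\Gamma(\tfrac{n-\alpha}{2}+l)}{\Gamma(\tfrac{n+\alpha}{2}+l)} .
\end{equation*}
In particular $\lambda_0(\alpha)=\int_{\mathbb{S}^n}|\omega-\eta|^{\alpha-n}d\eta$ is independent of $\omega$, so the identity $\int_{\mathbb{S}^n}\frac{u(\omega)-u(\eta)}{|\omega-\eta|^{n-\alpha}}d\eta=\lambda_0(\alpha)u(\omega)-I_\alpha u(\omega)$ holds. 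Letting $\alpha\to 0^+$, the difference $u(\omega)-u(\eta)$ supplies the cancellation that makes the left-hand side converge to the principal value $H(u)$, whence $\mu_l=\lim_{\alpha\to 0^+}\big(\lambda_0(\alpha)-\lambda_l(\alpha)\big)$.

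The limit is then a short special-function computation. The prefactor $\frac{2^\alpha\pi^{n/2}\Gamma(\alpha/2)}{\Gamma((n-\alpha)/2)}$ behaves like $\frac{2\pi^{n/2}}{\alpha\,\Gamma(n/2)}$ as $\alpha\to0^+$ (using $\Gamma(\alpha/2)\sim 2/\alpha$), while the bracketed factor $\frac{\Gamma(\frac{n-\alpha}{2})}{\Gamma(\frac{n+\alpha}{2})}-\frac{\Gamma(\frac{n-\alpha}{2}+l)}{\Gamma(\frac{n+\alpha}{2}+l)}$ vanishes at $\alpha=0$ with derivative $\psi(n/2+l)-\psi(n/2)$ (here $\psi=\Gamma'/\Gamma$), the two poles cancelling to give a finite answer. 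This yields $\mu_l=\frac{2\pi^{n/2}}{\Gamma(n/2)}\big(\psi(n/2+l)-\psi(n/2)\big)$, and dividing by $C_n=\frac4n\frac{\pi^{n/2}}{\Gamma(n/2)}$ produces exactly $\lambda_l=\frac n2\big(\frac{\Gamma'(n/2+l)}{\Gamma(n/2+l)}-\frac{\Gamma'(n/2)}{\Gamma(n/2)}\big)$, which is strictly increasing in $l$ by strict monotonicity of $\psi$ on $(0,\infty)$. Combined with the first paragraph this identifies both the eigenvalues and their eigenspaces, and completeness of the spherical harmonics in $L^2(\mathbb{S}^n)\supset\mathcal{D}_L$ shows there are no others.

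The main obstacle is the rigorous identification of $H$ with the finite part $\lim_{\alpha\to0^+}(\lambda_0(\alpha)\,\mathrm{Id}-I_\alpha)$: one must justify interchanging the limit $\alpha\to0^+$ with the integration, which requires controlling the borderline singularity of $|\omega-\eta|^{\alpha-n}$ uniformly in $\alpha$ and exploiting the Lipschitz-type cancellation of $u(\omega)-u(\eta)$ near the diagonal (testing on $Y_{l,m}$ or smooth $u$ suffices, and one then passes to general $u\in\mathcal{D}_L$ by density). The remaining ingredient, the Funk--Hecke/Gegenbauer evaluation giving $\lambda_l(\alpha)$ with the correct constants, is classical and can be quoted; once it is in hand, the $\alpha\to0$ asymptotics are routine.
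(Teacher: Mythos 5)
Your proposal is correct and follows essentially the same route as the paper: the paper also realizes $H$ as the limit as $s\to 0$ of (a normalized multiple of) the Riesz potential family $\mathcal{P}_{2s}$, applies the Funk--Hecke formula to get its spherical-harmonic eigenvalues, and extracts the digamma difference $\psi(n/2+l)-\psi(n/2)$ from the same cancellation of the $\Gamma(s)\sim 2/(2s)$ pole against the vanishing bracket. Your parametrization $\alpha=2s$ and the preliminary Schur's-lemma remark are only cosmetic differences.
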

\vskip0.5cm
\begin{proof}
For $s > 0$,  we define the operator $\mathcal P_{2s}$ given by
\begin{align}\label{definition of P}
\mathcal{P}_{2s}(u)(\xi) := \frac{\Gamma(\frac{n-2s}{2})}{2^{2s}\pi^{\frac{n}{2}}\Gamma(s)}\int_{\mathbb{S}^n}\frac{ u(\eta)}{|\xi - \eta|^{n-2s}} d \eta.
\end{align}
By the Funk-Hecke formula (see \cite[Eq.~(17)]{Be1993} and also \cite[Corollary 4.3]{FrLi2012}), we have
$$\mathcal{P}_{2s}(Y_{l,m})=\frac{\Gamma(l+n/2-s)}{\Gamma(l+n/2+s)}Y_{l,m}.$$
Expanding $u \in L^2(\mathbb{S}^n)$ in terms of the spherical harmonics,
\begin{equation}
\label{sph harm exp}
u=\sum_{l=0}^{\infty}\sum_{m=1}^{N(n,l)} u_{l,m}Y_{l,m} \qquad \qquad \textrm{with} \qquad u_{l,m}=\int_{\mathbb{S}^n}uY_{l,m} d \omega,
\end{equation}
then we have the representation
\begin{equation}\label{P2s} \mathcal{P}_{2s} u=\sum_{l=0}^{\infty}\sum_{m=1}^{N(n,l)} u_{l,m}\frac{\Gamma(l+n/2-s)}{\Gamma(l+n/2+s)}Y_{l,m}.
\end{equation}
In passing, we note that the right hand side of \eqref{P2s} is equal to $A_{2s}^{-1} u$ with the operator $A_{2s}$ defined in \eqref{eq:opas}.
Now, we are in the position to give the action of $H$ on $L^2$ functions with spherical harmonics expansion (\ref{sph harm exp}), which follows from the identity
\begin{align}\label{Hu}\nonumber
H(u)(\xi)&=\lim_{s \to 0} \int_{\mathbb{S}^n} \frac{u(\xi) - u(\eta)}{|\xi - \eta|^{n-2s}} d \eta  \\ \nonumber
& =\lim_{s\rightarrow 0}\frac{2\pi^{n/2}}{\Gamma(n/2)}\frac{1}{2s}\left(\frac{\Gamma(n/2-s)}{\Gamma(n/2+s)}-\mathcal{P}_{2s}\right)u\\
& =\frac{2\pi^{n/2}}{\Gamma(n/2)}\sum_{l=0}^{\infty}\sum_{m=1}^{N(n,l)} u_{l,m}\left(\frac{\Gamma^{\prime}(n/2+l)}{\Gamma(n/2+l)}-\frac{\Gamma^{\prime}(n/2)}{\Gamma(n/2)}\right)Y_{l,m}.
\end{align}
On the other hand,  a direct calculation gives
\begin{align*}
\lambda C_n\left[u-\int_{\mathbb{S}^n}ud\omega\frac{1}{|\mathbb{S}^n|}\right]&=\lambda\frac{4\pi^{n/2}}{n\Gamma(n/2)}[\sum_{l=0}^{\infty}\sum_{m=1}^{N(n,l)} u_{l,m}Y_{l,m}-u_{0,1}Y_{0,1}]\\ & =\lambda\frac{4\pi^{n/2}}{n\Gamma(n/2)}[\sum_{l=1}^{\infty}\sum_{m=1}^{N(n,l)} u_{l,m}Y_{l,m}].
\end{align*}
By the orthogonality of spherical harmonics of different degree, the eigenvalue $\lambda_l$ must satisfy
$$\lambda_l=\frac{n}{2}\left(\frac{\Gamma^{\prime}(n/2+l)}{\Gamma(n/2+l)}-\frac{\Gamma^{\prime}(n/2)}{\Gamma(n/2)}\right),~~l=1,2,\cdots,$$
and the corresponding eigenfunctions are spherical harmonics $Y_{l,m}$. This concludes the proof of our lemma.
\end{proof}

We are now ready to give the proof of our  Theorem \ref{Main Theorem 1} and Theorem \ref{thm2}.

\begin{proof}
[Proof of Theorem \ref{Main Theorem 1}]

First we will prove the sharp local stability of the log-Sobolev inequality on the sphere $\mathbb{S}^n$. Namely,
\begin{align}\label{loc l-S ine}
LS(u)\geq \frac{8\pi^{n/2}}{\Gamma(n/2)(n+2)}d^2(u,M)+o(d^{2}(u,M))~~\text{for all}~~u\in {\mathcal D}_{L}.
\end{align}
Take $u\in \mathcal{D}_{L}$ such that $d(u,M)$ is small enough. Since
$$d^2(u,M)=\inf_{c,\xi}\left\langle u-v_{c,\xi}, u-v_{c,\xi} \right\rangle,$$
it is easy to prove that the infimum is attained at a point $(c_0,\xi_0)\in (\mathbb{R}^+,B^{n+1})$. Then it follows that
$u-v_{c_0,\xi_0}\bot TM_{v_{c_0,\xi_0}}$ and we can rewrite $u$ as $u=v_{c_0,\xi_0}+d v$ with $\left\langle v, v \right\rangle=1$ and $d=d(u,M)$. Since the optimizer $v_{c_0,\xi_0}$ satisfies
 $$v_{c_0,\xi_0}=c_0J_{\Phi}^{1/2}$$
for some conformal transformation $\Phi$, then
$$u_{\Phi^{-1}}=c_0+dv_{\Phi^{-1}}$$ and $\left\langle v_{\Phi^{-1}}, v_{\Phi^{-1}} \right\rangle=1$.
Due to the functional $LS(u)$ being conformal invariant under the transformation $u\rightarrow u_{\Phi^{-1}}$ and  of class $\mathcal{C}^2$ on $\mathcal{D}_{L}\setminus\{0\}$, then
\begin{align}\label{LG}\nonumber
LS(u)&=LS(u_{\Phi^{-1}})\\
& =LS(c_0)+\frac{d}{dt}\Big|_{t=0}LS(c_0+tdv_{\Phi^{-1}})+\frac{1}{2}\frac{d^2}{dt^2}\Big|_{t=0}LS(c_0+tdv_{\Phi^{-1}})+o(d^2).
\end{align}
Careful computations yield
\begin{align}\label{first derivative}
LS(c_0)=\frac{d}{dt}\Big|_{t=0}LS(c_0+tdv_{\Phi^{-1}})=0,
\end{align}
\begin{align}\label{second derivative}
\frac{d^2}{dt^2}\Big|_{t=0}LS(c_0+tdv_{\Phi^{-1}})=4d^2\left\langle v_{\Phi^{-1}}, H(v_{\Phi^{-1}})-C_n v_{\Phi^{-1}}+\frac{C_n}{|\mathbb{S}^n|}\int_{\mathbb{S}^n}v_{\Phi^{-1}}d\omega \right\rangle.
\end{align}
Recall that we have already proved  in Lemma \ref{cha of tan spa} that the tangent space $$TM_{v_{c_0,\xi_0}}=\text{span}\{v_{1,\xi_0},\partial_\xi v_{c_0,\xi_0} \}$$
is equal to the space which is generated by the conformal transformation of $Y_{0,1}$ and $Y_{1,i}$, i.e. $\text{span} \{J_{\Phi^{-1}}^{1/2},J_{\Phi^{-1}}^{1/2}Y_{1,i}\circ \Phi^{-1},~~i=1,\cdots,n+1\}$. Since $$v\bot TM_{v_{c_0,\xi_0}}=\text{span} \{J_{\Phi^{-1}}^{1/2},J_{\Phi^{-1}}^{1/2}Y_{1,i}\circ \Phi^{-1},~~i=1,\cdots,n+1\},$$ then
$v_{\Phi^{-1}}\bot\text{span}\{Y_{0,1}, Y_{1,i}, ~~i=1,\cdots,n+1\}$. Thus, it follows that $v_{\Phi^{-1}}$ must have the spherical harmonics expansion
\begin{align}\label{harmonic expansion}
v_{\Phi^{-1}}=\sum_{l=2}^{\infty}\sum_{m=1}^{N(n,l)}c_{l,m}Y_{l,m}.
\end{align}
Therefore, by (\ref{LG}), (\ref{first derivative}), (\ref{second derivative}) and (\ref{harmonic expansion}) we can obtain
\begin{align}\label{computation of LG}\nonumber
LS(u)& = 2d^2\left\langle v_{\Phi^{-1}}, H(v_{\Phi^{-1}})-C_n v_{\Phi^{-1}}+\frac{C_n}{|\mathbb{S}^n|}\int_{\mathbb{S}^n}v_{\Phi^{-1}}d\omega \right\rangle+o(d^2)\\\nonumber
& = 2d^2\frac{4\pi^{n/2}}{n\Gamma(n/2)}\sum_{l=2}^{\infty}\sum_{m=1}^{N(n,l)}
\left(\frac{n}{2}(\frac{\Gamma^{\prime}(n/2+l)}{\Gamma(n/2+l)}-\frac{\Gamma^{\prime}(n/2)}{\Gamma(n/2)})-1\right)c^2_{l,m}+o(d^2)\\\nonumber
& \geq d^2\frac{8\pi^{n/2}}{n\Gamma(n/2)}\left(\frac{n}{2}(\frac{\Gamma^{\prime}(n/2+2)}{\Gamma(n/2+2)}-\frac{\Gamma^{\prime}(n/2)}{\Gamma(n/2)})-1\right)
\sum_{l=2}^{\infty}\sum_{m=1}^{N(n,l)}c^2_{l,m}+o(d^2)\\
& =d^2\frac{8\pi^{n/2}}{\Gamma(n/2)(n+2)}+o(d^2).
\end{align}

Next we will prove that the constant $\frac{8\pi^{n/2}}{\Gamma(n/2)(n+2)}$ in (\ref{loc l-S ine}) is optimal. To this end,   we only need to construct a suitable test function sequence $\{u_k\}_k$ such that $\lim\limits_{k\rightarrow+\infty}\frac{LS(u_k)}{d^2(u_k, M)}=\frac{8\pi^{n/2}}{\Gamma(n/2)(n+2)}$. In fact, we can choose
test functions $1+\epsilon Y_{2,m}$. The same calculations as done in (\ref{computation of LG}) directly lead to
$$LS(1+\epsilon Y_{2,m})=\frac{8\pi^{n/2}}{\Gamma(n/2)(n+2)}\epsilon^2+o(\epsilon^2).$$
On the other hand, since $Y_{2,m}\bot TM_{v_{1,0}}(v_{1,0}=1)$, then $d(1+\epsilon Y_{2,m},M)=\epsilon$ for sufficiently small $\epsilon$ and
$$\frac{LS(1+\epsilon Y_{2,m})}{d^2(1+\epsilon Y_{2,m},M)}\rightarrow \frac{8\pi^{n/2}}{\Gamma(n/2)(n+2)}~~\text{as}~~\epsilon\rightarrow 0.$$ This proves that the constant $\frac{8\pi^{n/2}}{\Gamma(n/2)(n+2)}$ is optimal.
\medskip

\end{proof}

Next, we claim that the optimal constant of global stability of log-Sobolev inequality must be strictly smaller than $\frac{8\pi^{n/2}}{\Gamma(n/2)(n+2)}$. Namely, we will give the proof of Theorem \ref{thm2}.
 
\begin{proof}
[Proof of Theorem \ref{thm2}]
 We adopt the idea of Konig's in \cite{Ko} to do the third-order Taylor expansion to the functional $LS$.
For fixed $y_2\in Y_2$ with
$\|y_2\|_{L^2}=1$, let $U_\epsilon=1+\epsilon y_2$. Direct computations yields
\begin{equation}\begin{split}
LS(1+\epsilon y_2)&=LS(1)+\frac{d}{dt}\Big|_{t=0}LS(1+t\epsilon y_2)+\frac{1}{2}\frac{d^2}{dt^2}\Big|_{t=0}LS(1+t\epsilon y_2)\\
&\ \ +\frac{1}{6}\frac{d^2}{dt^2}\Big|_{t=0}LS(1+t\epsilon y_2)+o(\epsilon^3),
\end{split}\end{equation}
where \begin{align}
LS(1)=\frac{d}{dt}\Big|_{t=0}LS(1+t\epsilon y_2)=0,
\end{align}
\begin{equation}\begin{split}
\frac{1}{2}\frac{d^2}{dt^2}\Big|_{t=0}LS(1+t\epsilon y_2)&=4\epsilon^2\left\langle y_2, H(y_2)-C_n y_2+\frac{C_n}{|\mathbb{S}^n|}\int_{\mathbb{S}^n}y_2d\omega \right\rangle\\
&=\frac{8\pi^{n/2}}{\Gamma(n/2)(n+2)}\epsilon^2,
\end{split}\end{equation}

\begin{equation}\begin{split}
\frac{1}{6}\frac{d^3}{dt^3}\Big|_{t=0}LS(1+t\epsilon y_2)=-\frac{2}{3}C_n\epsilon^3\int_{S^n}y_2^3d\sigma.
\end{split}\end{equation}

According to the definition of the global stability of log-Sobolev inequality, we obtain
\begin{equation}\begin{split}
\inf_{u\in {\mathcal D}_{L}} \frac{LS(u)}{d^2(u,M)}&\leq \frac{LS(1+\epsilon y_2)}{d(1+\epsilon y_2,M)}\\
&=\frac{8\pi^{n/2}}{\Gamma(n/2)(n+2)}-\frac{2}{3}C_n\epsilon\int_{S^n}y_2^3d\sigma+o(\epsilon).
\end{split}\end{equation}
In order to show that $$\inf_{u\in {\mathcal D}_{L}} \frac{LS(u)}{d^2(u,M)}<\frac{8\pi^{n/2}}{\Gamma(n/2)(n+2)},$$ we only need to find suitable
$y_2\in Y_2$ such that $\int_{S^n}y_2^3d\sigma>0$. In fact, pick $\tilde{y}_2=w_1w_2+w_2w_3+w_3w_1$, then we have
$$\tilde{y}_2^3=I_1(w)+I_2(w)+I_3(w),$$
where
\begin{equation}\begin{split}
&I_1(w)=6w_1^2w_2^2w_3^2\\
&I_2(w)=6(w_1w_2^2w_3^2+w_2w_1^2w_3^2+w_3w_1^2w_2^2)\\
&I_3(w)=w_1^3w_2^3+w_2^3w_3^3+w_1^3w_3^3.
\end{split}\end{equation}
Observing that all the monomials in $I_2$ and $I_3$ are odd function of at least one coordinate $w_i$ (i=1,2, 3), hence
$$\int_{S^n}I_2(w)d\sigma=\int_{S^n}I_3(w)d\sigma=0.$$
Picking $y_2=\frac{\tilde{y}_2}{\|\tilde{y}_2\|_{L^2}}$, we derive $\int_{S^n}y_2^3d\sigma=\|\tilde{y}_2\|_{L^2}^{-1}\int_{S^n}6w_1^2w_2^2w_3^2d\sigma>0$.

\vskip 0.1cm

\end{proof}

Finally, we will explain why we cannot expect the following stable log-Sobolev inequality using double integral as distance:
for any $u\in\mathcal D_{L}$, there holds
$$LS(u)\geq c d^2_{0}(u,M)$$
for some $c>0$, where $$d^2_{0}(u,v)=\iint\limits_{\mathbb{S}^n\times \mathbb{S}^n}\frac{[u(\omega)-v(\omega)-u(\eta)+v(\eta)]^2}{|\omega-\eta|^n}d \omega d \eta.$$

\begin{proof}
[Proof of Theorem \ref{Main Theorem 2}]
First, we claim that for any $u\in \mathcal D_{L}$, there exist a $v_{c_0,\xi_0}$ in the optimizer set $M$ such that
$$d_0(u,v_{c_0,\xi_0})=d_0(u,M).$$
Observing that $d_0(u,v_{c,\xi})$ is continuous with respect to the variables $(c,\xi)\in \mathbb{R}\times B^{n+1}$, in order to prove that $d_0(u,M)$ can be achieved by some function $v_{c_0,\xi_0}$, we only need to show that $$\lim\limits_{c\rightarrow+\infty}d_0(u,v_{c,\xi} )=+\infty,\ \
\lim\limits_{\xi\rightarrow \partial B^{n+1}}d_0(u,v_{c,\xi} )=+\infty.$$ Clearly,
$$\lim\limits_{c\rightarrow +\infty}d_0(u,v_{c,\xi} )\geq \lim\limits_{c\rightarrow +\infty}|d_0(v_{c,\xi},0)-d_0(u,0)|=\lim\limits_{c\rightarrow +\infty}|cd_0(v_{1,\xi}, 0)-d_0(u,0)|=+\infty.$$
On the other hand, by (\ref{Hu}) and Parseval's identity, there holds
$$d_0(v_{c,\xi}, 0)\geq C_0\|v_{c,\xi}-\overline{v_{c,\xi}}\|_{L^2(\mathbb{S}^n)}.$$
for some constant $C_0>0$, where $\overline{v_{c,\xi}}=\int_{\mathbb{S}^n}v_{c,\xi} d\omega/|\mathbb{S}^n|$.
A direct calculation gives that $$\lim_{\xi\rightarrow \partial B^{n+1}}\int_{\mathbb{S}^n}v_{c,\xi} d\omega\lesssim \lim_{\xi\rightarrow \partial B^{n+1}}\int_{\mathbb{S}^n}|\xi-w|^{-\frac{n}{2}}d\omega\lesssim 1.$$
Applying Fatou's Lemma, we directly derive that $$\lim\limits_{\xi\rightarrow \partial B^{n+1}}\|v_{c,\xi}-\overline{v_{c,\xi}}\|_{L^2(\mathbb{S}^n)}^2
\geq \int_{\mathbb{S}^n}\lim\limits_{\xi\rightarrow \partial B^{n+1}}|v_{c,\xi}-\overline{v_{c,\xi}}|^2d\omega=+\infty,$$ which together with $d_0(u,v_{c,\xi})\geq |d_0(v_{c,\xi},0)-d_0(u,0)|$ and
$d_0(v_{c,\xi}, 0)\geq C_0\|v_{c,\xi}-\overline{v_{c,\xi}}\|_{L^2(\mathbb{S}^n)}$ yields that
$$\lim\limits_{\xi\rightarrow \partial B^{n+1}}d_0(u,v_{c,\xi} )=+\infty.$$ This accomplies the proof of the existence of an optimizer.
\vskip 0.1cm

Now choose $1+\epsilon Y_{1,m}$,  by the same computation as done in (\ref{computation of LG}) we have
$$LS(1+\epsilon Y_{1,m})=o(\epsilon^2).$$
On the other hand, by the early proved claim that for any $u\in \mathcal D_{L}$, there exist a $v_{c_0,\xi_0}$ in the optimizer set $M$ such that
$$d_0(u,v_{c_0,\xi_0})=d_0(u,M),$$
there exist a $v_{c_0,\xi_0}$ such that $d_0(Y_{1,M},v_{c_0,\xi_0})=d_0(Y_{1,m},M)$, which implies $d_0(Y_{1,m},M)\neq 0$. Otherwise $Y_{1,m}=v_{c_0,\xi_0}+c$, which is impossible according to the definition of
$Y_{1,m}$. Then
$$d^{2}_{0}(1+\epsilon Y_{1,m},M)=\epsilon^2 d^2_0(Y_{1,m},M).$$
Hence, we derive that
$$\lim\limits_{\epsilon\rightarrow 0}\frac{LS(1+\epsilon Y_{1,m})}{d^{2}_{0}(1+\epsilon Y_{1,m},M)}=\lim\limits_{\epsilon\rightarrow 0}\frac{o(\epsilon^2)}{\epsilon^2 d^2_0(Y_{1,m},M)}=0,$$ which completes our proof.
\end{proof}

\section{Stability of the Moser-Onofri inequality on the sphere $\mathbb{S}^n$}

In this section, we will prove the local stability of the Moser-Onofri inequality (\ref{M-T ine 1}):
\begin{equation}\label{MOstab}
MO(u)\geq \frac{\Gamma(n+1)}{2^{n-1}\pi^{n/2}\Gamma(n/2)} d^2(u,M)+o(d^2(u,M))
\end{equation}
and show that the global version of this type of stable inequality does not hold.
\vskip 0.2cm

\begin{proof}
[Proof of Theorem \ref{Main Theorem 3}]
As we discussed in Section 3, for any $u\in \mathcal Q_{M}$, there exists a function $v_{c_0,\xi_0}$ in $M$ satisfying $u=v_{c_0,\xi_0}+d v$, where $\|v\|_{L^2}=1$, $v\perp TM_{v_{c_0,\xi_0}}$ and $v_{c_0,\xi_0}=c_0J_{\Phi}^{1/2}$ for some
conformal map on $\mathbb{S}^n$. Recall $u_{\Phi^{-1}}=J_{\Phi^{-1}}^{1/2}u\circ\Phi^{-1}$, thus
$$u_{\Phi^{-1}}=c_0+dv_{\Phi^{-1}}.$$
Since the functional $MO(u)$ is conformal invariant under the transformation $u\rightarrow u_{\Phi^{-1}}$ and of class $\mathcal{C}^2$ on $\mathcal{Q}_{M}\setminus\{0\}$, then
\begin{align}\label{MO}\nonumber
MO(u)&=MO(u_{\Phi^{-1}})\\
& =MO(c_0)+\frac{d}{dt}\Big|_{t=0}MO(c_0+tdv_{\Phi^{-1}})+\frac{1}{2}\frac{d^2}{dt^2}\Big|_{t=0}MO(c_0+tdv_{\Phi^{-1}})+o(d^2).
\end{align}
Direct computations give
\begin{align}\label{first derivative1}
MO(c_0)=\frac{d}{dt}\Big|_{t=0}MO(c_0+tdv_{\Phi^{-1}})=0,
\end{align}
\begin{align}\label{second derivative1}\nonumber
& \frac{d^2}{dt^2}\Big|_{t=0}MO(c_0+tdv_{\Phi^{-1}})\\
& \ \ =4d^2\left[\frac{1}{n!}\int_{\mathbb{S}^n} \!\!\!\!\!\!\!\!\!\!\!\!\; {}-{} \,\,\, v_{\Phi^{-1}}\big(A_n(v_{\Phi^{-1}})\big)d \omega
-\int_{\mathbb{S}^n} \!\!\!\!\!\!\!\!\!\!\!\!\; {}-{} \,\,\, v_{\Phi^{-1}}^2 d \omega+(\int_{\mathbb{S}^n} \!\!\!\!\!\!\!\!\!\!\!\!\; {}-{} \,\,\, v_{\Phi^{-1}} d \omega)^2\right].
\end{align}
Since we have already proved that
$$v\bot TM_{v_{c_0,\xi_0}}=\text{span} \{J_{\Phi^{-1}}^{1/2},J_{\Phi^{-1}}^{1/2}Y_{1,i}\circ \Phi^{-1},~~i=1,\cdots,n+1\},$$ then
$v_{\Phi^{-1}}\bot\text{span}\{Y_{0}, Y_{1,i}, ~~i=1,\cdots,n+1\}$. Thus $v_{\Phi^{-1}}$ has the spherical harmonics expansion
$$v_{\Phi^{-1}}=\sum_{l=2}^{\infty}\sum_{m=1}^{N(n,l)}c_{l,m}Y_{l,m}.$$
Using this fact with (\ref{MO}),(\ref{first derivative1}),(\ref{second derivative1}) and $\|v_{\Phi^{-1}}\|_{L^2}=1$ we can obtain
\begin{align}\label{computation of MO}\nonumber
MO(u) &= 2d^2\left[\frac{1}{n!}\int_{\mathbb{S}^n} \!\!\!\!\!\!\!\!\!\!\!\!\; {}-{} \,\,\, v_{\Phi^{-1}}(A_n v_{\Phi^{-1}}) d \omega
-\int_{\mathbb{S}^n} \!\!\!\!\!\!\!\!\!\!\!\!\; {}-{} \,\,\, v_{\Phi^{-1}}^2 d \omega+(\int_{\mathbb{S}^n} \!\!\!\!\!\!\!\!\!\!\!\!\; {}-{} \,\,\, v_{\Phi^{-1}} d \omega)^2\right]+o(d^2)\\\nonumber
& = \frac{2}{|\mathbb{S}^n|}d^2\sum_{l=2}^{\infty}\sum_{m=1}^{N_{n,l}}
\left(\frac{l(l+1)\cdots(l+n-1)}{n!}-1\right) c^2_{l,m}+o(d^2)\\
& \geq \frac{\Gamma(n+1)}{2^{n-1}\pi^{n/2}\Gamma(n/2)}d^2
\sum_{l=2}^{\infty}\sum_{m=1}^{N_{n,l}}c^2_{l,m}+o(d^2) =\frac{\Gamma(n+1)}{2^{n-1}\pi^{n/2}\Gamma(n/2)}d^2+o(d^2).
\end{align}

To prove that the constant $\frac{\Gamma(n+1)}{2^{n-1}\pi^{n/2}\Gamma(n/2)}$ is optimal, we choose the test functions $1+\epsilon Y_{2,m}$. By the same calculation  as done in (\ref{computation of MO}), we have
$$MO(1+\epsilon Y_{2,m})=\epsilon^2 \frac{\Gamma(n+1)}{2^{n-1}\pi^{n/2}\Gamma(n/2)} +o(\epsilon^2).$$
On the other hand, since $Y_{2,m}\perp TM_{1}$, then $d(1+\epsilon Y_{2,m},M)=\epsilon$ for small $\epsilon$. Then it follows that
$$MO(1+\epsilon Y_{2,m})=\frac{\Gamma(n+1)}{2^{n-1}\pi^{n/2}\Gamma(n/2)} d^2(1+\epsilon Y_{2,m},M)+o(d^2(1+\epsilon Y_{2,m},M)),$$
which implies the sharpness of the constant $\frac{\Gamma(n+1)}{2^{n-1}\pi^{n/2}\Gamma(n/2)}$.
\medskip

Finally, since the functional $MO(u)$ is $0$-homogeneous, by Theorem (\ref{l-sta imply g-sta}), we deduce that there does not exist the global version of the local stable Moser-Onofri inequality \eqref{MOstab} on the sphere $\mathbb{S}^n$.

\end{proof}

\bibliographystyle{amsalpha}

\end{document}